\crefname{hypothesis}{Hypothesis}{Hypotheses}
\title{A Two-step Krasnosel'ski\u{\i}-Mann Algorithm with Adaptive Momentum and Its Applications to Image Denoising and Matrix Completion\thanks{The first edition published on arXiv was on October 28th, 2025.}
\funding{This work was supported in part by the National Natural Science Foundation of China under Grants 12401120 and 12131004, and by Guangdong Basic and Applied Basic Research Foundation under Grant 2025A1515011544.}}
\author{Yongxin He$^\dagger$\thanks{$^\dagger$Department of Mathematics, Jinan University, Guangzhou, China (\email{wheyongxin@163.com, cyciliali@163.com, linyizun@jnu.edu.cn}).}
\and Jingyuan Li$^\dagger$
\and Yizun Lin$^{\dagger*}$
\and Deren Han$^\ddagger$\thanks{$^\ddagger$School of Mathematical Sciences, Beihang University, Beijing, China (\email{handr@buaa.edu.cn}); School of Mathematics and Computational Science, Xiangtan University, Xiangtan, China.}
}
\def\thanks#1{\protected@xdef\@thanks{\@thanks\protect\footnotetext[0]{#1}}}
\DeclareMathOperator*{\argmin}{argmin}
\def \bbR {\mathbb R}
\def \bbN {\mathbb N}
\def \mA {\mathcal{A}}
\def \mH {\mathcal{H}}
\def \mI {\mathcal{I}}
\def \mN {\mathcal{N}}
\def \bmA {{\bm A}}
\def \bmB {{\bm B}}
\def \bmD {{\bm D}}
\def \bmX {{\bm X}}
\def \bmY {{\bm Y}}
\def \bmZ {{\bm Z}}
\def \bmU {{\bm U}}
\def \bmV {{\bm V}}
\def \bmI {{\bm I}}
\def \bmh {{\bm h}}
\def \bmu {{\bm u}}
\def \bmv {{\bm v}}
\def \bms {{\bm s}}
\def \bmw {{\bm w}}
\def \bmx {{\bm x}}
\def \bmy {{\bm y}}
\def \bmz {{\bm z}}
\def \bmx {{\bm x}}
\def \bbRd {\bbR^{d}}
\def \bbRnn {\bbR^{n\times n}}
\def \prox {\mathrm{prox}}
\def \Fix {\text{Fix}}
\def \hx {\hat{\bmx}}
\begin{document}
\begin{sloppypar}

\nolinenumbers

\maketitle

\begin{abstract}
In this paper, we propose a Two-step Krasnosel'ski\u{\i}-Mann (KM) Algorithm (TKMA) with adaptive momentum for solving convex optimization problems arising in image processing. Such optimization problems can often be reformulated as fixed-point problems for certain operators, which are then solved using iterative methods based on the same operator, including the KM iteration, to ultimately obtain the solution to the original optimization problem. Prior to developing the TKMA, we introduce a KM iteration enhanced with adaptive momentum, derived from geometric properties of an $\alpha$-averaged nonexpansive operator $T$ with $\alpha\in(0,1)$, KM acceleration technique, and information from the composite operator $T^2$. The proposed TKMA is constructed as a convex combination of this adaptive-momentum KM iteration and the Picard iteration of $T^2$. We prove that the sequence generated by TKMA converges weakly to a fixed point of $T$ in a real Hilbert space. Moreover, under $\alpha\in(0,1/2]$ and specific assumptions on the adaptive momentum parameters, we prove that the algorithm achieves an $o(1/\sqrt{k})$ convergence rate in terms of the distance between successive iterates. Numerical experiments demonstrate that TKMA outperforms the FPPA, PGA, Fast KM algorithm, and Halpern algorithm on tasks such as image denoising and low-rank matrix completion.
\end{abstract}
\begin{keywords}
Two-step Krasnosel'ski\u{\i}-Mann algorithm, adaptive momentum, convex optimization, image denoising, matrix completion
\end{keywords}
%
\begin{AMS}
  49M37, 65K05, 90C25
\end{AMS}

\renewcommand{\thefootnote}{}
\footnotetext{$^*$Corresponding author.}

\section{Introduction}\label{sec:intro}
Fixed-point type algorithms have been popular in solving nondifferentiable convex or nonconvex optimization problems such as image processing \cite{chen2013primal,li2015multi,lu2016multiplicative,micchelli2011proximity,micchelli2013proximity,shen2016wavelet}, medical imaging \cite{krol2012preconditioned,lin2025accelerated,lin2019krasnoselskii,ross2017relaxed,zheng2019sparsity}, machine learning \cite{chen2015convergence,li2018fixed,li2019two,polson2015proximal}, and compressed sensing \cite{figueiredo2008gradient,zhu2015fast}. This popularity largely arises from their effectiveness in efficiently addressing optimization problems that involve non-differentiable terms, which are common in practical engineering scenarios. Additionally, their typically straightforward iterative structures make them appealing for implementation. Let $\mH$ be a real Hilbert space with inner product $\langle\cdot,\cdot\rangle$ and induced norm $\|\cdot\|$. In this paper, we are interested in formulating a fast fixed-point type algorithm for solving the fixed-point problem
$$
\text{Find} \ \ \bmx\in\mH \ \ \text{such that} \ \ \bmx=T(\bmx),
$$
where $T:\mH\rightarrow \mH$ is an operator. A basic and widely used procedure for approximating a fixed point of $T$ is the following process, also called Picard iteration,
\begin{equation}\label{Picarditer}
\bmx^{k+1}=T\bmx^k, \ \ k\in\bbN,
\end{equation}
where $\bmx^0\in\mH$ is a starting point. The Banach fixed-point theorem establishes that the sequence $\{\bmx^k\}$ generated by \eqref{Picarditer} converges strongly to the unique fixed point of $T$ with linear convergence rate, provided that $T$ is contractive. This conclusion does not hold if
$T$ is merely nonexpansive. To relax the restrictive contractiveness requirement, Krasnosel'ski\u{\i} proposed in \cite{krasnosel1955two} to apply the Picard iteration \eqref{Picarditer} not directly to $T$, but instead to the averaged operator $\frac{1}{2}\mI+\frac{1}{2}T$, where $\mI$ denotes the identity operator on $\mH$. Further, the Krasnosel'ski\u{\i}-Mann (KM) iteration is defined via a convex combination of the operators $\mI$ and $T$:
\begin{equation}\label{TradKM}
\bmx^{k+1}=(1-\lambda_k)\bmx^k+\lambda_k T\bmx^k, \ \ k\in\bbN,
\end{equation}
where $\lambda_k\in(0,1]$ and $T$ is nonexpansive. Weak convergence of this iteration was first established for a constant parameter $\lambda_k\equiv\lambda\in(0,1)$ in \cite{schaefer1957uber}, and later extended in \cite{groetsch1972note} to variable parameters under the condition $\sum_{k=0}^{\infty}\lambda_k(1-\lambda_k)=+\infty$. Lin and Xu also comprehensively analyzed the convergence and convergence rate of the Picard iteration in \cite{lin2022convergence}.

Extensive research has been devoted to the approximation of fixed points of nonexpansive mappings using the KM iteration \eqref{TradKM}, e.g., \cite{goebel1984uniform,kanzow2017generalized,liang2016convergence,opial1967weak,yao2008weak}.
However, the KM iteration lacks inherent acceleration mechanisms, which limits its practical efficiency. To address these limitations, Bot and Nguyen introduced in \cite{bot2023fast} the Fast KM algorithm, which is formulated as:
\begin{align}
\label{FastKM}\bmx^{k+1}=&\left(1-\frac{s\alpha}{2(k+\alpha)}\right)\bmx^k+\frac{(1-s)k}{k+\alpha}(\bmx^k-\bmx^{k-1})\notag\\
&+\frac{s\alpha}{2(k+\alpha)}T\bmx^k+\frac{sk}{k+\alpha}(T\bmx^k-T\bmx^{k-1}),\ \ k\in\bbN_+,
\end{align}
where $T$ is a $\theta$-averaged nonexpansive operator, $\alpha>2$, $\bmx^0,\bmx^1\in\mH$, and $0<s\leq\frac{1}{\theta}$.  This method incorporates Nesterov-type momentum into the KM iteration and demonstrates the weak convergence of the algorithm.

In parallel to the effort to enhance the KM iteration, there has also been a resurgence of interest in an alternative classical scheme known as the Halpern iteration \cite{halpern1967fixed}. Unlike KM-type updates, the Halpern method incorporates a fixed anchor point to guide the sequence. The basic form is given by
\begin{equation}\label{tradHalpern}
\bmx^{k}=\lambda_k\bmx^0+(1-\lambda_k) T\bmx^{k-1},\ \ k\in\bbN_+,
\end{equation}
where $\lambda_k=\frac{1}{k+1}$. This classical iteration has recently attracted renewed attention \cite{he2024convergence,lieder2021convergence,park2022exact,qi2021convergence,yoon2021accelerated,zhang2025hppp}. In particular, He et al. \cite{he2024convergence} proposed an adaptive way to choose the anchoring parameter $\lambda_k$, defined by $\lambda_k=\frac{1}{\varphi_k+1}$, where $\varphi_k=\frac{2\langle \bmx^{k-1}-T\bmx^{k-1},\bmx^{0}-\bmx^{k-1}\rangle}{\|\bmx^{k-1}-T\bmx^{k-1}\|^2}+1$. The Halpern iteration requires a fixed anchor point throughout the iteration process, and the choice of this anchor point can significantly affect the convergence behavior. Besides, its acceleration is not significant for some relatively complex real-world applications, such as image denoising and low-rank matrix completion problems.

In this work, we propose a theoretically guaranteed fixed-point type algorithm, Two-step KM Algorithm (TKMA). Specifically, we assume that $T$ is an $\alpha$-averaged nonexpansive operator with $\alpha\in(0,1)$. Inspired by the geometric structures of the averaged nonexpansiveness and the KM acceleration, as well as the information from the composite operator $T^2$, we develop an adaptive momentum scheme for the KM iteration. To further exploit higher-order information, the proposed TKMA is designed as a convex combination of the KM iteration with adaptive momentum and the Picard iteration of the composite operator $T^2$. The resulting iterative scheme is given by:
\begin{equation}\label{TKMA}
\bmx^{k+1}=(1-t)T^2\bmx^{k}+tT^{\theta_k}\bmx^{k},\ \ k\in\bbN,
\end{equation}
where $t\in(0,1)$, $\theta_k:=\frac{-\langle T\bmx^k-\bmx^k,T\bmx^k-T^2\bmx^k\rangle}{\|T\bmx^k-\bmx^k\|^2}$ and
$$
T^{\theta_k}:=T+\theta_k(T-\mI).
$$
We establish the weak convergence of the TKMA and its $o(1/\sqrt{k})$ convergence rate in terms of the distance between consecutive iterates under certain conditions.
The main contributions of this paper can be summarized as follows:
\begin{itemize}
\item[$(i)$] We propose a Two-step Krasnosel'ski\u{\i}-Mann Algorithm (TKMA). In addition, we theoretically prove the weak convergence of TKMA and demonstrate that when $\alpha\in(0,\frac{1}{2}]$ and the adaptive momentum parameter $\theta_k$ in the KM iteration satisfies $\sum_{k=0}^{\infty} |\theta_{k+1}-\theta_k|<+\infty$, the TKMA achieves an $o(1/\sqrt{k})$ convergence rate in terms of the distance between consecutive iterates.
\item[$(ii)$] We rigorously prove that the KM momentum accelerates convergence for averaged nonexpansive operators with appropriate momentum parameters. Building on the geometric properties of such operators and the acceleration of KM momentum, along with information from the operator $T^2$, we propose an adaptive momentum scheme for the KM iteration.
\item[$(iii)$] We applied the proposed TKMA to image denoising and low-rank matrix completion problems, demonstrating its high efficiency.
\end{itemize}

The rest of this paper presents the following contents. In section II, we present fixed-point algorithms for solving two classes of two-term optimization problems. In section III, we propose a new adaptive selection scheme for the momentum parameters in KM iteration, and then develop the TKMA. We analyze in section IV the convergence and convergence rate of the TKMA. Section V presents the numerical results for comparison of the proposed TKMA with the FPPA, the PGA, the Fast KM algorithm and the Halpern algorithm in image denoising and low-rank matrix completion problems. Section VI offers a conclusion.

\section{Fixed-point formulations for optimization}\label{FFO}
In this section, we introduce fixed-point algorithms designed to solve two classes of two-term optimization problems. We denote by $\Gamma_{0}(\mH)$ the class of all proper lower semicontinuous convex functions from $\mH$ to $[-\infty,+\infty]$. Firstly, we consider a class of two-term optimization problems of the form
\begin{equation}\label{twominfunc}
\argmin_{\bmx\in \mH} f(\bmx)+g(\bmx),
\end{equation}
where $f\in\Gamma_{0}(\mH)$ is differentiable and $g\in\Gamma_{0}(\mH)$ may not be differentiable. This type of optimization problems are raised from machine learning (e.g. $\ell_1$-SVM, LASSO regression)\cite{li2019two}, compressed sensing \cite{figueiredo2008gradient} and image processing \cite{beck2009fast,figueiredo2003algorithm}. Throughout this paper, we assume that the objective function $f+g$ has at least one minimizer without further mentioning. Solutions of problem \eqref{twominfunc} may be reformulated as fixed points of certain operators, depending on the smoothness of the objective function $f+g$. To this end, we first recall the notions of the proximity operator and subdifferential of a convex function. For $\psi\in\Gamma_{0}(\mH)$, the proximity operator of $\psi$ at $\bmx\in\mH$ is defined by
\[
\prox_{\psi}(\bmx):=\argmin_{\bmu\in\mH}\left\{\frac{1}{2}\Vert \bmu-\bmx\Vert^2+\psi(\bmu)\right\},
\]
and the subdifferential of $\psi$ at $\bmx\in\mH$ is defined by
$$
\partial\psi(\bmx):=\{\bmy\in\mH: \psi(\bmz)\geq\psi(\bmx)+\langle \bmy, \bmz-\bmx\rangle \  \text{for all} \ \bmz\in\mH\}.
$$
When the proximity operator of function $g$ has a closed form or can be easily computed, by using Fermat's rule (Theorem 16.3 of \cite{bauschke2017convex}) and a relation between the subdifferential and the proximity operator (Proposition 2.6 of \cite{micchelli2011proximity}), a minimizer of \eqref{twominfunc} is identified as a fixed point of operator
\begin{equation*}
T_1:=\prox_{\beta g}\circ(\mI-\beta\nabla f), \ \text{where} \ \beta>0.
\end{equation*}
In contrast, for some optimization problems of the form  \eqref{twominfunc}, the proximity operator of $g$ does not have a closed form. A notable example is the total variation (TV) regularized image denoising problem:
\begin{equation}\label{TVdenoise}
\argmin_{\bmu\in\bbRd} \frac{1}{2}\|\bmu-\bmx\|^2+\phi(\bmB\bmu),
\end{equation}
where $\bmu\in\bbRd$ represents the denoised image, $\bmx\in\bbRd$ denotes the observed noisy image, $\phi\circ \bmB$ is the TV regularization term, $\phi$ is  defined as $\phi(\cdot):=\mu\|\cdot\|_1$, $\mu>0$, and the matrix $\bmB\in\bbR^{2d\times d}$ is the first-order difference matrix, whose specific definition can be found in Section \ref{ImageDenoising}. Specifically, the proximity operator of function $\phi$ in problem \eqref{TVdenoise} has a closed form, while $\phi\circ \bmB$ does not. To derive the fixed-point algorithm for solving this problem, we need to use Fermat's rule and Theorem 6.39 of \cite{beck2017first}, which can be rewritten as the following lemma.
\begin{lemma}\label{secproxthm}
Let $\psi\in\Gamma_{0}(\mH)$ and $\bmx,\bmy\in\mH$. Then
\begin{equation*}
\bmy\in\partial \psi(\bmx) \ \text{if and only if} \ \bmx=\prox_\psi(\bmx+\bmy)
\end{equation*}
\end{lemma}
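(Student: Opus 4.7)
The plan is to apply Fermat's rule to the strongly convex minimization problem that defines the proximity operator, and then unwind the resulting subdifferential inclusion into the desired equivalence. Since this is essentially a textbook characterization, no deep technique is needed; the main care is just to verify that the standard subdifferential calculus applies to the quadratic-plus-$\psi$ objective.

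First I would expand the proximity operator from the definition given right before the lemma: $\bmx=\prox_\psi(\bmx+\bmy)$ means that $\bmx$ is the (unique) minimizer over $\mD$ of the strongly convex function $h(\bmu):=\tfrac{1}{2}\|\bmu-(\bmx+\bmy)\|^2+\psi(\bmu)$. By Fermat's rule (Theorem 16.3 of \cite{bauschke2017convex}, already invoked earlier in the section), this minimality is equivalent to $0\in\partial h(\bmx)$. Since $\tfrac{1}{2}\|\cdot-(\bmx+\bmy)\|^2$ is finite and continuous on all of $\mD$, the subdifferential sum rule applies without any extra qualification and gives $\partial h(\bmx)=(\bmx-(\bmx+\bmy))+\partial\psi(\bmx)=-\bmy+\partial\psi(\bmx)$.

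Then I would simply read off the equivalence: $0\in -\bmy+\partial\psi(\bmx)$ is the same as $\bmy\in\partial\psi(\bmx)$. Chaining the two biconditionals yields $\bmx=\prox_\psi(\bmx+\bmy)\iff\bmy\in\partial\psi(\bmx)$, which is the lemma. The reverse direction follows by the same chain read in the opposite order, using the uniqueness of the minimizer $\prox_\psi(\bmx+\bmy)$ to conclude it equals $\bmx$ once the optimality condition $\bmy\in\partial\psi(\bmx)$ is assumed.

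The hard part is essentially nothing — this is the standard resolvent characterization $\prox_\psi=(\mI+\partial\psi)^{-1}$ applied at the point $\bmx+\bmy$. The only step that could in principle fail is the sum rule for subdifferentials, but the smooth quadratic term makes this automatic, so the whole argument is a two-line chain of equivalences.
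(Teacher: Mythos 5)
Your argument is correct and is essentially the standard one: the paper gives no proof of this lemma, presenting it as a restatement of Fermat's rule together with Theorem 6.39 of Beck's book, and that cited result is proved exactly as you do --- apply Fermat's rule to the strongly convex objective defining $\prox_\psi(\bmx+\bmy)$, use the sum rule (automatic here since the quadratic term is everywhere differentiable), and read off $0\in-\bmy+\partial\psi(\bmx)\iff\bmy\in\partial\psi(\bmx)$. The only cosmetic remark is that the lemma's hypothesis names the function $f$ while the conclusion uses $\psi$; your proof correctly works with $\psi$ throughout.
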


We next summarize the derivation of the Fixed-Point Proximity Algorithm (FPPA) proposed in \cite{micchelli2011proximity} to solve problem \eqref{TVdenoise}. By Fermat's rule, $\bmu$ is a solution of model \eqref{TVdenoise} if and only if
\begin{equation*}\label{partial}
\bm0\in(\bmu-\bmx)+\bmB^{\top}\partial\phi(\bmB\bmu),
\end{equation*}
Then there exists
\begin{equation}\label{parphiBu}
\lambda \bmy\in\partial\phi(\bmB\bmu),\ \lambda>0
\end{equation}
such that
\begin{equation*}\label{partial1}
\bm0=(\bmu-\bmx)+\lambda \bmB^{\top}\bmy,
\end{equation*}
that is,
\begin{equation}\label{partial2}
\bmu=\bmx-\lambda \bmB^{\top}\bmy.
\end{equation}
In addition, \eqref{parphiBu} is equivalent to
\begin{equation}\label{partial3}
\bmy\in\partial\left(\frac{1}{\lambda}\phi\right)(\bmB\bmu).
\end{equation}
Using Lemma \ref{secproxthm} for \eqref{partial3}, we have
\begin{equation*}\label{parprox}
\bmB\bmu=\prox_{\frac{1}{\lambda}\phi}(\bmB\bmu+\bmy),
\end{equation*}
which is equivalent to
\begin{equation}\label{Iprox}
\bmy=\left(\mI-\prox_{\frac{1}{\lambda}\phi}\right)(\bmB\bmu+\bmy).
\end{equation}
Substituting \eqref{partial2} into \eqref{Iprox} gives
\begin{equation*}\label{FPPAfordeno}
\bmy=\left(\mI-\prox_{\frac{1}{\lambda}\phi}\right)\left(\bmB\bmx+(I-\lambda \bmB\bmB^{\top})\bmy\right).
\end{equation*}
Let operators $\mA:\bbR^{2d}\to\bbR^{2d}$ and $T_2:\bbR^{2d}\to\bbR^{2d}$ be defined by
$$
\mA(\bmy):=\bmB\bmx+(I-\lambda \bmB\bmB^{\top})\bmy,\ \ \bmy\in\bbR^{2d}
$$
and
\begin{equation}\label{operatorTfordenoising}
T_2:=\left(\mI-\prox_{\frac{1}{\lambda}\phi}\right)\circ\mA,
\end{equation}
respectively. Then the solution of model \eqref{TVdenoise} can be represented by
\begin{equation*}
\bmu=\bmx-\lambda \bmB^{\top}\bmy^*,
\end{equation*}
where $\bmy^*$ is a fixed point of operator $T_2$.

\section{Two-step Krasnosel'ski\u{\i}-Mann Algorithm}\label{sec3}
In this section, we propose a Two-step Krasnosel'ski\u{\i}-Mann Algorithm (TKMA) inspired by the geometric properties of the averaged nonexpansive operator and the KM acceleration technique.
To this end, we give the definitions of the nonexpansiveness and averaged-nonexpansiveness.
We say that $T$ is nonexpansive if $\|T\bmx-T\bmy\|\leq\|\bmx-\bmy\|$ holds for all $\bmx,\bmy\in\mH$, and $T$ is $\alpha$-averaged nonexpansive if there exists a nonexpansive operator $\mN:\mH\rightarrow\mH$ such that $T=(1-\alpha)\mI+\alpha\mN$, where $\alpha\in(0,1)$.

The KM iteration \eqref{TradKM} can be viewed as a generalization of fixed-point iteration with momentum acceleration. This momentum scheme enhances the approximation of the solution by adding the current fixed-point update to the difference between the current and previous updates. Specifically, for $\theta\in\bbR$ and operator $T:\mH\to\mH$, we define
\begin{equation}\label{def:Ttheta}
T^{\theta}:=T+\theta(T-\mI).
\end{equation}
The KM iteration \eqref{TradKM} can then be rewritten as
\begin{equation}\label{eq:KMiter}
\bmx^{k+1}=T^{\theta_k}\bmx^k,\ \ k\in\bbN,
\end{equation}
where $\{\theta_k\}\subset\bbR$ are the momentum step-sizes. The selection of these step-sizes is crucial not only for efficiency but also for ensuring convergence. Suppose that operator $T$ is $\alpha$-averaged nonexpansive, i.e., $T=(1-\alpha)\mI+\alpha\mN$, where $\mN$ is a nonexpansive operator. Then the KM iteration \eqref{eq:KMiter} of $T$ can be rewritten as
\begin{equation}\label{KMaveriter}
\bmx^{k+1}=[1-\alpha(1+\theta_k)]\bmx^k+\alpha(1+\theta_k)\mN \bmx^k,\ \ k\in\bbN.
\end{equation}
In order to explore the convergence conditions of the iteration \eqref{KMaveriter}, we introduce the well-known KM theorem. We denote by $\Fix(T)$ the set of all fixed points of operator $T$.
\begin{lemma}[Krasnosel'ski\u{\i}-Mann theorem \cite{groetsch1972note}]\label{lem:KMthm}
Let $\mN:\mH\to\mH$ be a nonexpansive operator such that $\Fix(\mN)\neq\varnothing$. For $\{\alpha_k\}\subset[0,1]$ and $\bmx^0\in\mH$, define
$$
\bmx^{k+1}:=(1-\alpha_k)\bmx^k+\alpha_k\mN \bmx^k,\ \ k\in\bbN.
$$
If $\sum_{k=0}^{\infty}\alpha_k(1-\alpha_k)=+\infty$, then the sequence $\{\bmx^k\}$ converges weakly to a point of $\Fix(\mN)$.
\end{lemma}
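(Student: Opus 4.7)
My plan is to establish three standard ingredients in sequence: Fej\'er monotonicity of $\{\bmx^k\}$ relative to $\Fix(\mN)$, the asymptotic regularity property $\|\mN \bmx^k - \bmx^k\| \to 0$, and a cluster-point analysis exploiting the finite-dimensionality of $\mD$ to upgrade subsequential convergence to convergence of the full sequence.

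First, I would fix an arbitrary $p \in \Fix(\mN)$ and expand $\|\bmx^{k+1} - p\|^2$ via the convex-combination identity
$$
\|(1-t)\bmu + t\bmv\|^2 = (1-t)\|\bmu\|^2 + t\|\bmv\|^2 - t(1-t)\|\bmu - \bmv\|^2,
$$
applied with $\bmu = \bmx^k - p$, $\bmv = \mN \bmx^k - p$, and $t = \alpha_k$. Using $\mN p = p$ together with nonexpansiveness of $\mN$, this yields the fundamental bound
$$
\|\bmx^{k+1} - p\|^2 \leq \|\bmx^k - p\|^2 - \alpha_k(1-\alpha_k)\|\mN \bmx^k - \bmx^k\|^2.
$$
Hence $\{\|\bmx^k - p\|\}$ is non-increasing (so its limit exists and $\{\bmx^k\}$ is bounded), and telescoping produces the summability $\sum_{k=0}^{\infty} \alpha_k(1-\alpha_k)\|\mN \bmx^k - \bmx^k\|^2 < +\infty$.

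Combined with the divergence hypothesis, this forces $\liminf_k \|\mN \bmx^k - \bmx^k\| = 0$. To upgrade this to a genuine limit, I would show that $\{\|\mN \bmx^k - \bmx^k\|\}$ is non-increasing. Substituting $\bmx^{k+1} = (1-\alpha_k)\bmx^k + \alpha_k \mN \bmx^k$ and using the triangle inequality together with nonexpansiveness of $\mN$ gives
$$
\|\mN \bmx^{k+1} - \bmx^{k+1}\| \leq \|\mN \bmx^{k+1} - \mN \bmx^k\| + \|\mN \bmx^k - \bmx^{k+1}\| \leq \alpha_k\|\mN \bmx^k - \bmx^k\| + (1-\alpha_k)\|\mN \bmx^k - \bmx^k\|,
$$
which collapses to $\|\mN \bmx^{k+1} - \bmx^{k+1}\| \leq \|\mN \bmx^k - \bmx^k\|$. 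Monotonicity together with $\liminf = 0$ then yields $\lim_k \|\mN \bmx^k - \bmx^k\| = 0$. This $\liminf$-to-$\lim$ upgrade is the step I expect to require the most care; everything before and after is relatively routine.

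Finally, since $\mD$ is finite-dimensional and $\{\bmx^k\}$ is bounded, Bolzano--Weierstrass extracts a subsequence $\bmx^{k_j} \to \bmx^\star \in \mD$. Continuity of $\mN$ combined with $\|\mN \bmx^{k_j} - \bmx^{k_j}\| \to 0$ then forces $\mN \bmx^\star = \bmx^\star$, i.e., $\bmx^\star \in \Fix(\mN)$. Re-applying Fej\'er monotonicity with the choice $p = \bmx^\star$ shows that $\|\bmx^k - \bmx^\star\|$ has a limit, and since this limit is attained as $0$ along $\{\bmx^{k_j}\}$, the entire sequence converges to $\bmx^\star$, completing the argument.
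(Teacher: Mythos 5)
Your proof is correct. Note first that the paper does not prove this lemma at all: it is quoted as a classical result with citations to Krasnosel'ski\u{\i} and Mann, so there is no in-paper argument to compare against. Your argument is the standard one and every step checks out: the convex-combination identity $\|(1-t)\bmu+t\bmv\|^2=(1-t)\|\bmu\|^2+t\|\bmv\|^2-t(1-t)\|\bmu-\bmv\|^2$ together with $\mN p=p$ gives both Fej\'er monotonicity and the summability of $\alpha_k(1-\alpha_k)\|\mN\bmx^k-\bmx^k\|^2$; the divergence hypothesis then forces $\liminf_k\|\mN\bmx^k-\bmx^k\|=0$; and your monotonicity computation for the residual (splitting $\|\mN\bmx^{k+1}-\bmx^{k+1}\|$ via the triangle inequality and noting $\|\mN\bmx^k-\bmx^{k+1}\|=(1-\alpha_k)\|\mN\bmx^k-\bmx^k\|$) correctly upgrades this to a genuine limit. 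Two remarks. First, your appeal to Bolzano--Weierstrass is legitimate only because the paper takes $\mD$ to be a finite-dimensional subspace of $\mH$; in a general Hilbert space the same skeleton delivers only weak convergence (one would replace compactness by Opial's lemma), which is the form in which Reich's cited result is stated. Second, your final cluster-point-plus-Fej\'er argument is essentially the same device the paper uses in its own Lemma~\ref{lem:converge}, so your proof is fully consistent with the techniques deployed elsewhere in the paper.
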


According to Lemma \ref{lem:KMthm}, the KM iteration \eqref{KMaveriter} weakly converges if
\begin{equation*}
\{\theta_k\}\in\left[-1,\frac{1-\alpha}{\alpha}\right]\ \text{and} \ \sum_{k=0}^\infty(1+\theta_k)[1-\alpha(1+\theta_k)]=+\infty
\end{equation*}
hold. In fact, we can demonstrate that for appropriate momentum step-sizes, the KM iteration achieves accelerated convergence compared to the original fixed-point iteration of $T$. Here, we explore an adaptive selection scheme for the KM momentum parameters, inspired by the geometric structure of averaged nonexpansive operators and the KM acceleration. Suppose that $T$ is a $\frac{1}{4}$-averaged nonexpansive operator and let $\hx\in\Fix(T)$. For a given vector $\bmx$, define $\bmx_{1/2}:=\frac{1}{2}\hx+\frac{1}{2}\bmx$ and $\bmx_{1/4}:=\frac{1}{4}\hx+\frac{3}{4}\bmx$. The range of $T\bmx$, i.e., the region where the output vector can fall after $T$ acts on $\bmx$, is then the ball centered at $\bmx_{1/4}$ with radius $\left\|\bmx-\bmx_{1/4}\right\|$. As illustrated in Figure \ref{fig:KMacceler} and by \eqref{def:Ttheta}, the KM momentum accelerates the fixed-point iteration of a $\frac{1}{4}$-averaged nonexpansive operator. This figure shows that when the vector $T^{\theta}\bmx - \bmx$ is orthogonal to $T^{\theta}\bmx - \hx$, the point $T^{\theta}\bmx$ minimizes the distance to $\hx$ on the line through $\bmx$ and $T\bmx$. In this case, by the Pythagorean theorem, we get that the optimal momentum step-size $\theta^*$ is given by
\begin{equation}\label{thetastar}
\theta^*:=\frac{-\langle T\bmx-\bmx,T\bmx-\hat{\bmx}\rangle}{\|T\bmx-\bmx\|^2}.
\end{equation}
\begin{figure}[H]
\centering
\includegraphics[width=0.4\textwidth]{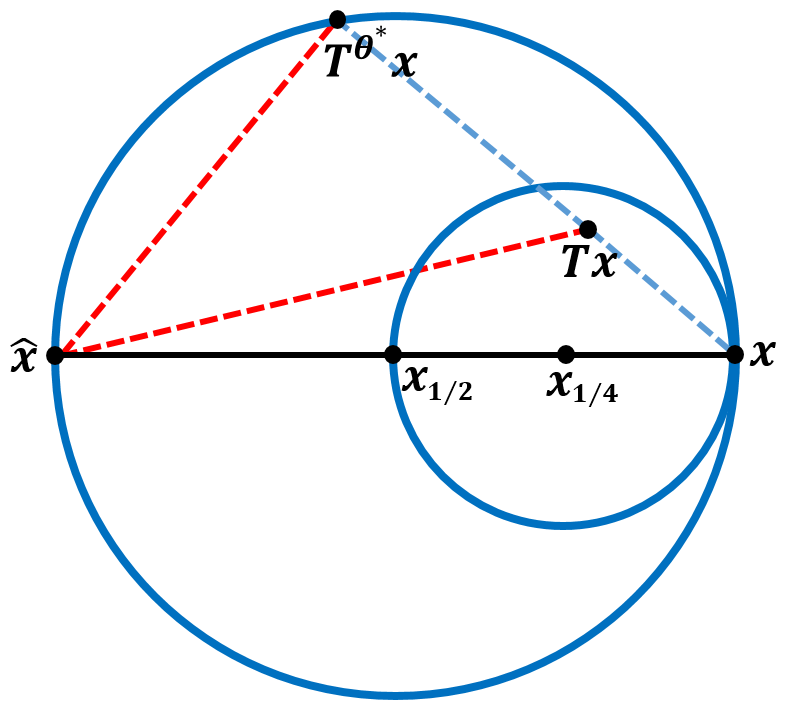}
\caption{KM momentum acceleration for the fixed-point iteration of $\frac{1}{4}$-averaged nonexpansive operators.}
\label{fig:KMacceler}
\end{figure}

In the remainder of this paper, we assume that operator $T$ has at least one fixed point. We shall show that with the step size in \eqref{thetastar}, the KM iteration converges faster to the fixed point than the standard fixed-point iteration. To this end, we first recall the Proposition 4.35 of \cite{bauschke2017convex} as the following Lemma \ref{lem:equivAN}, which provide an equivalent condition of averaged nonexpansiveness.

\begin{lemma}\label{lem:equivAN}
Let $\alpha\in(0,1)$. Then operator $T:\mH\to\mH$ is $\alpha$-averaged nonexpansive if and only if for all $\bmx,\bmy\in\mH$,
\begin{equation*}\label{neq:equivAN}
\|T\bmx-T\bmy\|^2+\frac{1-\alpha}{\alpha}\|(\mI-T)\bmx-(\mI-T)\bmy\|^2\leq\|\bmx-\bmy\|^2.
\end{equation*}
\end{lemma}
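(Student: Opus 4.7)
The plan is to reduce the characterization to the plain nonexpansiveness of an auxiliary operator $\mN$ and then verify that the two inequalities are algebraic rearrangements of each other. By the definition of averaged nonexpansiveness, $T$ is $\alpha$-averaged if and only if the operator
\[
\mN := \tfrac{1}{\alpha}T - \tfrac{1-\alpha}{\alpha}\mI = \mI + \tfrac{1}{\alpha}(T-\mI)
\]
is nonexpansive. Once this reformulation is in hand, everything reduces to expanding a single norm inequality.

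First I would set $\bmu := \bmx-\bmy$ and $\bmv := T\bmx-T\bmy$ for arbitrary $\bmx,\bmy\in\mD$. Then
\[
\mN\bmx-\mN\bmy = \tfrac{1}{\alpha}\bigl(\bmv-(1-\alpha)\bmu\bigr),
\]
so nonexpansiveness of $\mN$ is equivalent to
\[
\|\bmv-(1-\alpha)\bmu\|^2 \leqs \alpha^2\|\bmu\|^2.
\]
Expanding the left-hand side and cancelling $(1-\alpha)^2\|\bmu\|^2$ against $\alpha^2\|\bmu\|^2$ on the right produces the compact relation
\[
\|\bmv\|^2 - 2(1-\alpha)\langle \bmu,\bmv\rangle \leqs (2\alpha-1)\|\bmu\|^2. \tag{$\ast$}
\]

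Next I would show that the target inequality \eqref{neq:equivAN} is the same statement. Writing it in terms of $\bmu,\bmv$ gives
\[
\|\bmv\|^2 + \tfrac{1-\alpha}{\alpha}\|\bmu-\bmv\|^2 \leqs \|\bmu\|^2.
\]
Expanding $\|\bmu-\bmv\|^2 = \|\bmu\|^2 - 2\langle\bmu,\bmv\rangle + \|\bmv\|^2$, collecting like terms, and multiplying through by $\alpha$ yields exactly $(\ast)$. Since each implication is just a reversible algebraic manipulation, the equivalence follows in both directions.

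There is no real obstacle here; the only care needed is bookkeeping the coefficients so that the factor $\tfrac{1-\alpha}{\alpha}$ in \eqref{neq:equivAN} matches the $(1-\alpha)$ that appears after expanding $\|\bmv-(1-\alpha)\bmu\|^2$. The whole argument hinges on the single observation that the $\|\bmu\|^2$ terms on the two sides of the nonexpansiveness inequality combine to give the coefficient $2\alpha-1$, which is precisely what \eqref{neq:equivAN} produces after clearing $\alpha$.
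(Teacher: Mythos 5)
Your proof is correct. Note, however, that the paper does not prove this lemma at all: it simply recalls it as Proposition~4.35 of Bauschke--Combettes, so there is no in-paper argument to compare against. What you have written is essentially the standard textbook derivation: since for fixed $\alpha$ the operator $\mN$ in the definition $T=(1-\alpha)\mI+\alpha\mN$ is uniquely determined as $\mN=\mI+\tfrac{1}{\alpha}(T-\mI)$, averagedness of $T$ is equivalent to nonexpansiveness of this specific $\mN$, and your expansion correctly shows that both $\|\bmv-(1-\alpha)\bmu\|^2\leqs\alpha^2\|\bmu\|^2$ and inequality \eqref{neq:equivAN} reduce to the identical relation $\|\bmv\|^2-2(1-\alpha)\langle\bmu,\bmv\rangle\leqs(2\alpha-1)\|\bmu\|^2$ (using $\alpha^2-(1-\alpha)^2=2\alpha-1$ and the positivity of $\alpha$ to justify clearing denominators reversibly). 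The argument is complete and self-contained, which is arguably a small improvement over the paper's bare citation.
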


\begin{theorem}\label{thm_KMaccel}
Let $\alpha\in(0,\frac{1}{2})$, $T$ be an $\alpha$-averaged nonexpansive and $\hx\in\Fix(T)$. For any given $\bmx\in\mH\backslash\Fix(T)$, define $\theta^*:=\frac{-\langle T\bmx-\bmx,T\bmx-\hat{\bmx}\rangle}{\|T\bmx-\bmx\|^2}$, then $\theta^*>0$ and $\|T^{\theta}\bmx-\hat{\bmx}\|<\|T\bmx-\hat{\bmx}\|$ for $\theta\in(0,2\theta^*)$. Moreover, function $\psi(\theta):=\|T^{\theta}\bmx-\hat{\bmx}\|$ is strictly decreasing on $[0,\theta^*]$.
\end{theorem}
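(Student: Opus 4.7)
The plan is to expand $\|T^{\theta}\bmx-\hx\|^2$ directly as a quadratic in $\theta$, which reduces parts (2) and (3) to elementary facts about a parabola and isolates the nontrivial content in the single claim $\theta^*>0$. Writing $T^{\theta}\bmx-\hx=(T\bmx-\hx)+\theta(T\bmx-\bmx)$ and using the identity $\langle T\bmx-\bmx,T\bmx-\hx\rangle=-\theta^*\|T\bmx-\bmx\|^2$, completing the square gives
\[\psi(\theta)^2=\|T\bmx-\hx\|^2+\|T\bmx-\bmx\|^2\bigl[(\theta-\theta^*)^2-(\theta^*)^2\bigr].\]
Once $\theta^*>0$ is established, the right-hand side is a strictly convex quadratic in $\theta$ with unique minimum at $\theta=\theta^*$, so $\psi^2$ (and hence $\psi=\sqrt{\psi^2}$) is strictly decreasing on $[0,\theta^*]$, giving (3); and $\psi(\theta)^2<\psi(0)^2$ is equivalent to $(\theta-\theta^*)^2<(\theta^*)^2$, i.e., $0<\theta<2\theta^*$, giving (2).

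The main step is therefore to verify $\langle T\bmx-\bmx,T\bmx-\hx\rangle<0$. I would apply Lemma \ref{lem:equivAN} with $\bmy=\hx$ and use $T\hx=\hx$ to obtain
\[\|T\bmx-\hx\|^2+\tfrac{1-\alpha}{\alpha}\|\bmx-T\bmx\|^2\leq\|\bmx-\hx\|^2.\]
Expanding $\|T\bmx-\hx\|^2=\|(T\bmx-\bmx)+(\bmx-\hx)\|^2$ and cancelling $\|\bmx-\hx\|^2$ from both sides yields
\[\tfrac{1}{\alpha}\|T\bmx-\bmx\|^2\leq 2\langle T\bmx-\bmx,\hx-\bmx\rangle.\]
Substituting $\hx-\bmx=(\hx-T\bmx)+(T\bmx-\bmx)$ rearranges this into
\[\langle T\bmx-\bmx,T\bmx-\hx\rangle\leq\bigl(1-\tfrac{1}{2\alpha}\bigr)\|T\bmx-\bmx\|^2,\]
whose right-hand side is strictly negative since $\alpha<\tfrac{1}{2}$ and $\|T\bmx-\bmx\|>0$ (as $\bmx\notin\FixT$). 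This proves $\theta^*>0$, and in fact supplies the quantitative bound $\theta^*\geq\tfrac{1-2\alpha}{2\alpha}$.

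The only delicate point is the careful use of the strict inequality $\alpha<\tfrac{1}{2}$: averaged nonexpansiveness supplies only a nonstrict estimate, and the strict conclusion $\theta^*>0$ is extracted because the coefficient $1-\tfrac{1}{2\alpha}$ is strictly negative. At the boundary $\alpha=\tfrac{1}{2}$ the same chain of inequalities collapses to $\theta^*\geq 0$, which would fail to produce the strict monotonicity in (3) and the strict inequality in (2); this explains why the hypothesis is stated with strict inequality.
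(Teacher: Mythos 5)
Your proposal is correct and follows essentially the same route as the paper: the key step $\theta^*\geq\frac{1}{2\alpha}-1>0$ is obtained from Lemma \ref{lem:equivAN} with $\bmy=\hx$ via exactly the same inequality chain, merely rearranged. Your completed square $\psi(\theta)^2=\|T\bmx-\hx\|^2+\|T\bmx-\bmx\|^2\bigl[(\theta-\theta^*)^2-(\theta^*)^2\bigr]$ is the algebraic form of the paper's orthogonal decomposition $\psi(\theta)=\sqrt{\|T^{\theta^*}\bmx-\hx\|^2+(\theta^*-\theta)^2\|T\bmx-\bmx\|^2}$ and yields the same conclusions for $\theta\in(0,2\theta^*)$ and the monotonicity on $[0,\theta^*]$.
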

\begin{proof}
Note that
\begin{equation}\label{eq:Txxhatxnorm}
\|T\bmx-\bmx\|^2+\|T\bmx-\hx\|^2-\|\bmx-\hx\|^2=2\langle T\bmx-\bmx,T\bmx-\hx\rangle.
\end{equation}
The $\alpha$-averaged nonexpansiveness together with Lemma \ref{lem:equivAN} gives
\begin{equation}\label{neq:TxxhatxAN}
\|T\bmx-\hx\|^2-\|\bmx-\hx\|^2\leq-\frac{1-\alpha}{\alpha}\|T\bmx-\bmx\|^2.
\end{equation}
Combining \eqref{eq:Txxhatxnorm} and \eqref{neq:TxxhatxAN} yields
\begin{align*}
-\langle T\bmx-\bmx,T\bmx-\hx\rangle&\geq-\frac{1}{2}\left(\|T\bmx-\bmx\|^2-\frac{1-\alpha}{\alpha}\|T\bmx-\bmx\|^2\right)\\
&=\left(\frac{1}{2\alpha}-1\right)\|T\bmx-\bmx\|^2,
\end{align*}
which implies that $\theta^*\geq\frac{1}{2\alpha}-1>0$ since $\alpha\in(0,\frac{1}{2})$ and $\|T\bmx-\bmx\|>0$. Then for $\theta\in(0,2\theta^*)$, we have $|\theta^*-\theta|<|\theta^*|$. Hence
\begin{align*}
\|T^{\theta^*}\bmx-T^{\theta}\bmx\|&=|\theta^*-\theta|\cdot\|T\bmx-\bmx\|\\
&<|\theta^*|\cdot\|T\bmx-\bm\bmx\|=\|T^{\theta^*}\bmx-T\bmx\|
\end{align*}
It is easy to verify that
$$
\langle T^{\theta^*}\bmx-\hat{\bmx},T^{\theta^*}\bmx-T^{\theta}\bmx\rangle=0,\ \ \mbox{for all}\ \theta\in\bbR.
$$
Now by the Pythagorean theorem, for $\theta\in(0,2\theta^*)$,
\begin{align*}
\|T^{\theta}\bmx-\hat{\bmx}\|&=\sqrt{\|T^{\theta^*}\bmx-\hat{\bmx}\|^2+\|T^{\theta^*}\bmx-T^{\theta}\bmx\|^2}\\
&<\sqrt{\|T^{\theta^*}\bmx-\hat{\bmx}\|^2+\|T^{\theta^*}\bmx-T\bmx\|^2}\\
&=\|T\bmx-\hat{\bmx}\|.
\end{align*}
Note that
\begin{align*}
\psi(\theta)&=\sqrt{\|T^{\theta^*}\bmx-\hat{\bmx}\|^2+\|T^{\theta^*}\bmx-T^{\theta}\bmx\|^2}\\
&=\sqrt{\|T^{\theta^*}\bmx-\hat{\bmx}\|^2+(\theta^*-\theta)^2\|T\bmx-\bmx\|^2},
\end{align*}
which together with the fact $\|T\bmx-\bmx\|>0$ implies that $\psi$ is strictly decreasing on $[0,\theta^*]$. This completes the proof.
\end{proof}

From the above theorem, we know that the convergence speed increases as the momentum parameter $\theta$ approaches $\theta^*$ in each iteration. We denote by $O(\bmx,r)$ the ball with center $\bmx$ and radius $r$, where $\bmx\in\mH$, $r\in\bbR_+$. For the case $\alpha\in\big[\frac{1}{2},1\big)$, analogous to the proof of Theorem \ref{thm_KMaccel}, we can establish the following result.
\begin{theorem}\label{thm_KMaccel1}
Let $\alpha\in\big[\frac{1}{2},1\big)$, $T$ be an $\alpha$-averaged nonexpansive and $\hx\in\Fix(T)$. For given $\bmx\in\mH\backslash\Fix(T)$, define $\theta^*:=\frac{-\langle T\bmx-\bmx,T\bmx-\hx\rangle}{\|T\bmx-\bmx\|^2}$. If $T\bmx\in O(\hx_{1/2},\|\bmx-\hx_{1/2}\|)$, then $\theta^*\geq0$ and $\|T^{\theta}\bmx-\hx\|\leq\|T\bmx-\hx\|$ for $\theta\in[0,2\theta^*]$. Otherwise, $\theta^*<0$ and $\|T^{\theta}\bmx-\hx\|\leq\|T\bmx-\hx\|$ for $\theta\in[2\theta^*,0]$.
\end{theorem}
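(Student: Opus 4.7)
The plan is to follow the Pythagorean template used in the proof of Theorem~\ref{thm_KMaccel}, but to re-read the sign of $\theta^{*}$ as a purely geometric alternative rather than something forced by averaged nonexpansiveness. Writing $\hx_{1/2}=\tfrac{1}{2}\hx+\tfrac{1}{2}\bmx$, a routine expansion combined with the polarization identity \eqref{eq:Txxhatxnorm} yields
\begin{equation*}
\|T\bmx-\hx_{1/2}\|^{2}-\|\bmx-\hx_{1/2}\|^{2}=\langle T\bmx-\bmx,\,T\bmx-\hx\rangle.
\end{equation*}
Therefore $T\bmx\in O(\hx_{1/2},\|\bmx-\hx_{1/2}\|)$ is equivalent to $\langle T\bmx-\bmx,T\bmx-\hx\rangle\leq 0$, i.e.\ to $\theta^{*}\geq 0$, while its complement is equivalent to $\theta^{*}<0$. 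This dispenses with the sign claim in both branches of the statement.

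For the norm bound I would reuse the orthogonality that already drove the proof of Theorem~\ref{thm_KMaccel}. Since $T^{\theta}\bmx-\hx=(T\bmx-\hx)+\theta(T\bmx-\bmx)$, the very definition of $\theta^{*}$ forces $T^{\theta^{*}}\bmx-\hx$ to be orthogonal to $T\bmx-\bmx$. Applying the Pythagorean theorem then gives, for every $\theta\in\bbR$,
\begin{equation*}
\|T^{\theta}\bmx-\hx\|^{2}=\|T^{\theta^{*}}\bmx-\hx\|^{2}+(\theta-\theta^{*})^{2}\|T\bmx-\bmx\|^{2}.
\end{equation*}
Specialising at $\theta=0$ produces $\|T\bmx-\hx\|^{2}=\|T^{\theta^{*}}\bmx-\hx\|^{2}+(\theta^{*})^{2}\|T\bmx-\bmx\|^{2}$, and subtracting the two identities shows that $\|T^{\theta}\bmx-\hx\|\leq\|T\bmx-\hx\|$ holds exactly when $(\theta-\theta^{*})^{2}\leq(\theta^{*})^{2}$. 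This last inequality translates into $\theta\in[0,2\theta^{*}]$ when $\theta^{*}\geq 0$ and into $\theta\in[2\theta^{*},0]$ when $\theta^{*}<0$, matching the two cases in the conclusion.

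The step I expect to require the most care is the first equivalence, since it is what replaces the role played by Lemma~\ref{lem:equivAN} in Theorem~\ref{thm_KMaccel}. For $\alpha\in(0,\tfrac12)$ that lemma was strong enough to force $\theta^{*}>0$ unconditionally, but for $\alpha\in[\tfrac12,1)$ the same inequality no longer rules out $\langle T\bmx-\bmx,T\bmx-\hx\rangle>0$, so strict positivity of $\theta^{*}$ is genuinely lost. Introducing the ball $O(\hx_{1/2},\|\bmx-\hx_{1/2}\|)$ is precisely the geometric device that converts this sign ambiguity into the binary case split in the statement, after which all remaining work is the same Pythagorean bookkeeping as before and no further ingredients are needed.
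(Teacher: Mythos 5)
Your argument is correct and is precisely the one the paper intends: the paper omits the proof of this theorem, remarking only that it is ``analogous to the proof of Theorem~\ref{thm_KMaccel}'', and your Pythagorean decomposition around the orthogonality $\langle T^{\theta^{*}}\bmx-\hx,\,T\bmx-\bmx\rangle=0$ is exactly that analogous argument. The one genuinely new ingredient — the identity $\|T\bmx-\hx_{1/2}\|^{2}-\|\bmx-\hx_{1/2}\|^{2}=\langle T\bmx-\bmx,T\bmx-\hx\rangle$, which converts ball membership into the sign of $\theta^{*}$ (reading $O(\cdot,\cdot)$ as a closed ball so that the dichotomy $\theta^{*}\geq 0$ versus $\theta^{*}<0$ is exact) — you verify correctly, and your observation that the $\alpha$-averagedness hypothesis is not actually needed here is also accurate.
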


According to Theorem \ref{thm_KMaccel} and \ref{thm_KMaccel1}, we know that setting the KM parameter as $\theta^*$ yields best approximation to the fixed point $\hx$ along the momentum direction. Of course, the expression for $\theta^*$ involves $\hx$, the desired solution, which is naturally unknown. To derive a practical KM algorithm, we replace $\hx$ with the approximation $T^2\bmx:=T(T(\bmx))$. This term leverages information from a two-step fixed-point iteration and is closer to $\hx$ than the point $T\bmx$. Then we propose the following generalized iterative scheme of the Two-step KM Algorithm (TKMA):

\begin{equation}\label{eq:IKMAiter}
\bmx^{k+1}=(1-t)T^2\bmx^k+t T^{\theta_k}\bmx^k,\ \ k\in\bbN,
\end{equation}
where $t\in(0,1)$,
\begin{equation}\label{def:Tthetak}
T^{\theta_k}:=T+\theta_k(T-\mI),
\end{equation}
and
\begin{equation}\label{IKMAthetak}
\theta_k:=\frac{-\langle T\bmx^k-\bmx^k,T\bmx^k-T^2\bmx^k\rangle}{\|T\bmx^k-\bmx^k\|^2}.
\end{equation}
To make the theoretical analysis more concise, we always assume that $T\bmx^k\neq\bmx^k$ to ensure the well-definedness of $\theta_k$. If
$T\bmx^k=\bmx^k$, it is clear that the algorithm has already found a fixed point, which is the optimal solution to the original problem, and thus no further iterations are needed.

\section{Convergence and convergence rate analysis}\label{sec4}
In this section, we always let $T$ be an operator from $\mH$ to $\mH$, $\hat{\bmx}$ be a fixed point of $T$, and let $\{\bmx^k\}\subset\mH$ be a sequence generated by algorithm \eqref{eq:IKMAiter}. We then analyze the weak convergence and convergence rate of the TKMA. To this end, we first derive the range of the parameter $\theta$ and investigate the nonexpansiveness of operator $T^{\theta}$ in the following proposition.

\begin{proposition}\label{prop:thetarange}
Suppose that $\alpha\in(0,1)$, $T$ is an $\alpha$-averaged nonexpansive operator and $T^{\theta}$ is defined by \eqref{def:Ttheta}. For any $\bmx\in \mH\backslash Fix(T)$, let $\theta:=\frac{-\langle T\bmx-\bmx,T\bmx-T^2\bmx\rangle}{\|T\bmx-\bmx\|^2}$. Then the following facts hold:
\begin{itemize}
\item[$(i)$] $\theta\in[1-2\alpha,1]$;
\item[$(ii)$] If $\alpha\in(0,1/2]$, then $T^{\theta}$ is nonexpansive.
\end{itemize}
\end{proposition}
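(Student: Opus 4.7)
The plan is to establish the two bounds of part~(i) separately and then derive part~(ii) by substituting the averaged decomposition $T=(1-\alpha)\mI+\alpha\mN$ into $T^{\theta}$. The upper bound $\theta\le 1$ is immediate: apply the Cauchy--Schwarz inequality to the numerator of $\theta$ and use the nonexpansiveness of $T$ (a consequence of $\alpha$-averagedness) to get $\|T^2\bmx-T\bmx\|\le\|T\bmx-\bmx\|$, so that $\theta\le\|T^2\bmx-T\bmx\|/\|T\bmx-\bmx\|\le 1$.

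The lower bound $\theta\ge 1-2\alpha$ is the crux. First, I would apply Lemma~\ref{lem:equivAN} to the pair $(\bmx,T\bmx)$, expand $\|(\mI-T)\bmx-(\mI-T)T\bmx\|^2$, multiply through by $\alpha$, and simplify to obtain the equivalent ``monotone'' form
\[
\|T^2\bmx-T\bmx\|^2+(1-2\alpha)\|T\bmx-\bmx\|^2\le 2(1-\alpha)\langle T^2\bmx-T\bmx,\,T\bmx-\bmx\rangle.
\]
By the definition of $\theta$, the right-hand side equals $2(1-\alpha)\theta\|T\bmx-\bmx\|^2$. The target bound then reduces to showing $\|T^2\bmx-T\bmx\|\ge(1-2\alpha)\|T\bmx-\bmx\|$. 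To prove this, I would write $T=(1-\alpha)\mI+\alpha\mN$ with $\mN$ nonexpansive, so that $T\bmx-\bmx=\alpha(\mN\bmx-\bmx)$ and $T^2\bmx-T\bmx=\alpha(\mN(T\bmx)-T\bmx)$, and then use the triangle inequality
\[
\|\mN\bmx-\bmx\|\le\|\mN\bmx-\mN(T\bmx)\|+\|\mN(T\bmx)-T\bmx\|+\|T\bmx-\bmx\|,
\]
together with the nonexpansiveness of $\mN$ and the identity $\|T\bmx-\bmx\|=\alpha\|\mN\bmx-\bmx\|$, to deduce $\|\mN(T\bmx)-T\bmx\|\ge(1-2\alpha)\|\mN\bmx-\bmx\|$. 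Multiplying by $\alpha$ yields the claimed estimate on $\|T^2\bmx-T\bmx\|$, and plugging it back into the displayed inequality gives $\theta\ge 1-2\alpha$ after canceling the common factor $2(1-\alpha)>0$.

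For part~(ii), I would expand $T^{\theta}=(1+\theta)T-\theta\mI$ via the same decomposition of $T$ to obtain $T^{\theta}=[1-\alpha(1+\theta)]\mI+\alpha(1+\theta)\mN$. Setting $\beta:=\alpha(1+\theta)$, the range $\theta\in[1-2\alpha,1]$ from (i) together with $\alpha\in(0,1/2]$ gives $\beta\in[2\alpha(1-\alpha),2\alpha]\subseteq(0,1]$, with $\beta=1$ occurring only in the boundary case $\alpha=1/2$ and $\theta=1$. When $\beta<1$, the identity above is precisely the certificate that $T^{\theta}$ is $\beta$-averaged nonexpansive with the stated constant; when $\beta=1$, the combination collapses to $T^{\theta}=\mN$, which is merely nonexpansive, covering the exceptional case.

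The main obstacle, as hinted above, is obtaining the sharp constant $1-2\alpha$ in the lower bound on $\theta$: a direct application of Lemma~\ref{lem:equivAN} alone only delivers $\theta\ge(1-2\alpha)/(2(1-\alpha))$, which is strictly weaker than the claim whenever $\alpha<1/2$. The essential extra ingredient is therefore the geometric estimate $\|T^2\bmx-T\bmx\|\ge(1-2\alpha)\|T\bmx-\bmx\|$, extracted through the nonexpansive factor $\mN$ in the averaged decomposition.
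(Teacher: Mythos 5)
Your proposal is correct, and it reaches the key lower bound by a genuinely different route than the paper. Both arguments apply Lemma~\ref{lem:equivAN} to the pair $(\bmx,T\bmx)$ and arrive at the same intermediate inequality (the paper's \eqref{thetaineq}, which is exactly your ``monotone form'' after moving $(2\alpha-1)\|\bmx-T\bmx\|^2$ across). The divergence is in how $\|T^2\bmx-T\bmx\|^2$ is bounded from below. The paper uses Cauchy--Schwarz on the defining identity $\theta\|T\bmx-\bmx\|^2=\langle T\bmx-\bmx,\,T^2\bmx-T\bmx\rangle$ to get $\|T^2\bmx-T\bmx\|^2\geq\theta^2\|T\bmx-\bmx\|^2$, which turns \eqref{thetaineq} into the quadratic $\bigl(\theta-(1-\alpha)\bigr)^2\leq\alpha^2$ and yields \emph{both} endpoints $1-2\alpha\leq\theta\leq 1$ in one stroke, without ever invoking the nonexpansive factor $\mN$ in part~(i). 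You instead prove the standalone geometric estimate $\|T^2\bmx-T\bmx\|\geq(1-2\alpha)\|T\bmx-\bmx\|$ by extracting $\mN$ from the averaged decomposition and chaining the triangle inequality through $\mN\bmx\to\mN(T\bmx)\to T\bmx\to\bmx$; this is correct (and the arithmetic $(1-2\alpha)^2+(1-2\alpha)=2(1-\alpha)(1-2\alpha)$ closes the reduction cleanly), but it only delivers the lower bound, so you need a separate Cauchy--Schwarz-plus-nonexpansiveness argument for $\theta\leq 1$. The trade-off: the paper's proof is more economical and self-contained within Lemma~\ref{lem:equivAN}; yours isolates a reusable quantitative fact about iterating an $\alpha$-averaged operator that may have independent interest. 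Your diagnosis that Lemma~\ref{lem:equivAN} alone (i.e., discarding the $\|T^2\bmx-T\bmx\|^2$ term) gives only $\theta\geq\frac{1-2\alpha}{2(1-\alpha)}$ is accurate, and correctly identifies where the extra ingredient is needed --- the paper's extra ingredient is simply a different (self-referential) lower bound on the same term. Part~(ii) is handled identically in both proofs.
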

\begin{proof}
It follows from the definition of $\theta$ that
$$
\theta\|\bmx-T\bmx\|^2=\langle \bmx-T\bmx,T\bmx-T^2\bmx\rangle,
$$
and hence
\begin{align}
\notag&\|T\bmx-T^2\bmx\|^2+(1-\alpha)\|\bmx-T\bmx\|^2-2(1-\alpha)\theta\|\bmx-T\bmx\|^2\\
\label{eq:xTxTxT2xsquare}=&\alpha\|T\bmx-T^2\bmx\|^2+(1-\alpha)\|(\bmx-T\bmx)-(T\bmx-T^2\bmx)\|^2.
\end{align}
Since $T$ is $\alpha$-averaged nonexpansive, by Lemma \ref{lem:equivAN}, we get that
$$
\|T\bmx-T^2\bmx\|^2+\frac{1-\alpha}{\alpha}\|(\bmx-T\bmx)-(T\bmx-T^2\bmx)\|^2\leq\|\bmx-T\bmx\|^2,
$$
which together with \eqref{eq:xTxTxT2xsquare} yields
\begin{equation}\label{thetaineq}
\|T\bmx-T^2\bmx\|^2-2(1-\alpha)\theta\|\bmx-T\bmx\|^2\leq(2\alpha-1)\|\bmx-T\bmx\|^2.
\end{equation}
Using the Cauchy-Schwarz inequality, we have
$$
\langle \bmx-T\bmx,T\bmx-T^2\bmx\rangle\leq\|T\bmx-T^2\bmx\|\|\bmx-T\bmx\|.
$$
Thus
\begin{equation}\label{neq:TxT2xns}
\theta^2\|\bmx-T\bmx\|^2=\frac{\langle \bmx-T\bmx,T\bmx-T^2\bmx\rangle^2}{\|T\bmx-\bmx\|^2}\leq\|T\bmx-T^2\bmx\|^2.
\end{equation}
Combining \eqref{thetaineq} and \eqref{neq:TxT2xns}, we see that
\begin{align*}
\theta^2\|\bmx-T\bmx\|^2-2(1-\alpha)\theta\|\bmx-T\bmx\|^2\leq(2\alpha-1)\|\bmx-T\bmx\|^2.
\end{align*}
Then
\begin{align*}
\theta^2-2(1-\alpha)\theta\leq(2\alpha-1),
\end{align*}
that is,
$$
\left(\theta-(1-\alpha)\right)^2\leq\alpha^2,
$$
which implies that item $(i)$ holds.

We then consider item $(ii)$. Since $T$ is $\alpha$-averaged nonexpansive, $\alpha\in(0,1)$, there exists a nonexpansive operator $\mN$ such that $T=(1-\alpha)\mI+\alpha\mN$, and hence $T^{\theta}=(1+\theta)T-\theta\mI=\big[1-\alpha(1+\theta)\big]\mI+\alpha(1+\theta)\mN$. It has been shown in item $(i)$ that $\theta\in[1-2\alpha,1]$. If $\alpha\in(0,1/2]$, we have $\alpha(1+\theta)\in(0,1]$, and hence $T^{\theta}$ is  nonexpansive. This completes the proof.
\end{proof}

In the following lemma, we prove the monotonicity of sequence $\{\|\bmx^{k}-\hat{\bmx}\|\}$.

\begin{lemma}\label{lem:monotonic}
Let $T$ be an $\alpha$-averaged nonexpansive operator, $\alpha\in(0,1)$, and let $\{\bmx^k\}$ be a sequence generated by \eqref{eq:IKMAiter}. If either $\alpha\in(0,1/2]$, $t\in(0,1)$ or $\alpha\in(1/2,1)$, $t\in\left(0,\frac{(1-\alpha)^2+(1-\alpha)\sqrt{(1-\alpha)^2+8\alpha^2}}{4\alpha^2}\right)$ holds, then there exists $\eta>0$ such that the following fact holds:
\begin{equation}\label{xkbounded}
\|\bmx^{k+1}-\hat{\bmx}\|^2\leq\|\bmx^k-\hat{\bmx}\|^2-\eta\|T\bmx^k-\bmx^k\|^2,\ \ \forall\ \hat{\bmx}\in\Fix(T), \ k\in\bbN.
\end{equation}
\end{lemma}
\begin{proof}
For any $k\in\bbN$, we let $\bmu^k:=T\bmx^k-\bmx^k$, $\bmv^k:=T^2\bmx^k-T\bmx^k$, $\bmw^k:=\bmx^k-\hat{\bmx}$. Then $T\bmx^k-\hat{\bmx}=\bmu^k+\bmw^k$, $T^2\bmx^k-\hat{\bmx}=\bmu^k+\bmv^k+\bmw^k$, and $T^{\theta_k}\bmx^k-\hat{\bmx}=(1+\theta_k)\bmu^k+\bmw^k$.

We first consider the case $\alpha\in(0,1/2]$ and $t\in(0,1)$. For any $\hat{\bmx}\in\Fix(T)$ and $k\in\bbN$, it follows from the $\alpha$-averaged nonexpansiveness of $T$ and Lemma \ref{lem:equivAN} that
\begin{equation}\label{neq:T2xkxhatnorm2}
\|\bmu^k+\bmv^k+\bmw^k\|^2\leq\|\bmu^k+\bmw^k\|^2\leq\|\bmw^k\|^2-\frac{1-\alpha}{\alpha}\|\bmu^k\|^2.
\end{equation}
Of course, $\|\bmu^k+\bmv^k+\bmw^k\|\leq\|\bmw^k\|$. Note that $\Fix\left(T^{\theta}\right)=\Fix\left(T\right)$. By item $(ii)$ of Proposition \ref{prop:thetarange}, we have
\begin{equation}\label{neq:nonexpTthetak}
\|(1+\theta_k)\bmu^k+\bmw^k\|\leq\|\bmw^k\|.
\end{equation}
Then
\begin{equation}\label{neq:T2xkTthetakxk}
\langle \bmu^k+\bmv^k+\bmw^k,(1+\theta_k)\bmu^k+\bmw^k\rangle\leq\|\bmu^k+\bmv^k+\bmw^k\|\cdot\|(1+\theta_k)\bmu^k+\bmw^k\|\leq\|\bmw^k\|^2.
\end{equation}
According to the definition of $\bmx^{k+1}$ in \eqref{eq:IKMAiter}, and then using \eqref{neq:nonexpTthetak} and \eqref{neq:T2xkTthetakxk}, we have
\begin{align}\label{neq:xkmonoto}
\|\bmx^{k+1}-\hat{\bmx}\|^2&=\left\|(1-t)(\bmu^k+\bmv^k+\bmw^k)+t((1+\theta_k)\bmu^k+\bmw^k)\right\|^2\notag\\
&\leq(1-t)^2\|\bmu^k+\bmv^k+\bmw^k\|^2+(2t-t^2)\|\bmw^k\|^2.
\end{align}
Let $\eta:=\frac{(1-t)^2(1-\alpha)}{\alpha}>0$. Combining \eqref{neq:xkmonoto} with \eqref{neq:T2xkxhatnorm2} readily yields the desired result.

We next deal with the case $\alpha\in(1/2,1)$ and $t\in\left(0,\bar{\alpha}\right)$, where
\begin{equation}\label{defalphabar}
\bar{\alpha}:=\frac{(1-\alpha)^2+(1-\alpha)\sqrt{(1-\alpha)^2+8\alpha^2}}{4\alpha^2}.
\end{equation}
Note that
\begin{align}\label{xkmonoto}
\|\bmx^{k+1}-\hat{\bmx}\|^2=&\|\bmw^k+(1+t\theta_k)\bmu^k+(1-t)\bmv^k\|^2\notag\\
=&\|\bmw^k\|^2+2(1+t\theta_k)\langle\bmw^k,\bmu^k\rangle+2(1-t)\langle\bmw^k,\bmv^k\rangle+(1+t\theta_k)^2\|\bmu^k\|^2\notag\\
&+2(1+t\theta_k)(1-t)\langle\bmu^k,\bmv^k\rangle+(1-t)^2\|\bmv^k\|^2.
\end{align}
Since $T$ is $\alpha$-averaged nonexpansive, by Lemma \ref{lem:equivAN}, we obtain that
\begin{equation}\label{averaged2}
\|\bmu^k+\bmv^k+\bmw^k\|^2\leq\|\bmu^k+\bmw^k\|^2-\frac{1-\alpha}{\alpha}\|\bmv^k\|^2.
\end{equation}
It follows from \eqref{neq:T2xkxhatnorm2} and \eqref{averaged2} that
\begin{equation}\label{neq:wuandwv}
\langle \bmw^k,\bmu^k\rangle\leq-\frac{1}{2\alpha}\|\bmu^k\|^2  \ , \ \ \ \langle \bmv^k,\bmw^k\rangle\leq-\frac{1}{2\alpha}\|\bmv^k\|^2-\langle \bmv^k,\bmu^k\rangle.
\end{equation}
Let $A_k:=(1+t\theta_k)\left(1+t\theta_k-\frac{1}{\alpha}\right)$, $B:=(1-t)\left(1-t-\frac{1}{\alpha}\right)$, $C_k:=2t\theta_k(1-t)$. It is easy to see that $\bar{\alpha}$ defined by \eqref{defalphabar} is in $(0,1)$. Since $\alpha\in(1/2,1)$ and $t\in\left(0,\bar{\alpha}\right)$, we know that $B<0$. Substituting \eqref{neq:wuandwv} into \eqref{xkmonoto} yields
\begin{align*}\label{xkmonoto1}
\|\bmx^{k+1}-\hat{\bmx}\|^2\leq&\|\bmw^k\|^2-\frac{1+t\theta_k}{\alpha}\|\bmu^k\|^2-\frac{1-t}{\alpha}\|\bmv^k\|^2-2(1-t)\langle\bmu^k,\bmv^k\rangle\notag\\
&+(1+t\theta_k)^2\|\bmu^k\|^2+2(1+t\theta_k)(1-t)\langle\bmu^k,\bmv^k\rangle+(1-t)^2\|\bmv^k\|^2\notag\\
=&\|\bmw^k\|^2+A_k\|\bmu^k\|^2+B\|\bmv^k\|^2+C_k\langle\bmu^k,\bmv^k\rangle.
\end{align*}
To prove \eqref{xkbounded}, it suffices to show that there exist $\eta>0$ such that
\begin{equation}\label{neq:ABC}
A_k\|\bmu^k\|^2+B\|\bmv^k\|^2+C_k\langle\bmu^k,\bmv^k\rangle\leq-\eta\|\bmu^k\|^2,\ \ \forall\ k\in\bbN.
\end{equation}
By the orthogonal decomposition theorem, there exists a unique $\lambda$ and $\bms$ satisfied $\langle\bmu^k,\bms\rangle=0$ such that $\bmv^k=\lambda\bmu^k+\bms$. Then we have
\begin{align*}\label{eq:ABC1}
A_k\|\bmu^k\|^2+B\|\bmv^k\|^2+C_k\langle\bmu^k,\bmv^k\rangle&=\left(A_k+B\lambda^2+C_k\lambda\right)\|\bmu^k\|^2+B\|\bms\|^2\notag\\
&\leq\left(A_k+B\lambda^2+C_k\lambda\right)\|\bmu^k\|^2.
\end{align*}
Let $h(\lambda):=A_k+B\lambda^2+C_k\lambda$. Since $B<0$, we know that $h(\lambda)$ is a concave function, and its maximum value is $h(-\frac{C_k}{2B})=A_k-\frac{C_k^2}{4B}$. To prove \eqref{neq:ABC}, it suffices to show that there exists $\eta>0$ such that $A_k-\frac{C_k^2}{4B}\leq-\eta$, $\forall k\in\bbN$.

Recall from item $(i)$ of Proposition \ref{prop:thetarange} that $\theta_k\in[1-2\alpha,1]$. The definitions of $A_k$ and $C_k$, together with the hypotheses $\alpha\in(1/2,1)$ and $0<t<\bar{\alpha}<1$, imply that both $A_k$ and $C_k^2$ attain their respective maximum at $\theta_k=1$. Moreover, since $B<0$ is independent of $\theta_k$, we can see that $A_k-\frac{C_k^2}{4B}$ is also maximized at $\theta_k=1$. In the case $\theta_k=1$,
$$
A_k-\frac{C_k^2}{4B}=(1+t)(t-a)+\frac{(1-t)t^2}{a+t},
$$
where $a:=\frac{1}{\alpha}-1>0$. Now we set $\eta:=-(1+t)(t-a)-\frac{(1-t)t^2}{a+t}$. Since $\alpha\in(1/2,1)$ and $t\in(0,\bar{\alpha})$, it is easy to verify that $\eta>0$, which implies that $A_k-\frac{C_k^2}{4B}\leq-\eta$, $\forall k\in\bbN$. This completes the proof.
\end{proof}

To prove the convergence of TKMA, we still need the following lemma.

\begin{lemma}\label{lem:converge}
Let $T$ be a nonexpansive operator and $\{\bmx^k\}$ be a sequence in $\mH$. If there exists an $\eta>0$ such that
\begin{equation}\label{xkdecreasing}
\|\bmx^{k+1}-\hat{\bmx}\|^2\leq\|\bmx^k-\hat{\bm\bmx}\|^2-\eta\|T\bmx^k-\bmx^k\|^2,\ \ \forall\ \hat{\bmx}\in\Fix(T),\ k\in\bbN,
\end{equation}
then $\{\bmx^k\}$ weakly converges to a fixed point of $T$.
\end{lemma}
\begin{proof}
It follows from \eqref{xkdecreasing} that $\{\|\bmx^k-\hat{\bmx}\|\}$ is monotonically decreasing. Of course, this sequence has a lower bound $0$. Thus, $\lim_{k\rightarrow\infty}\|\bmx^k-\hat{\bmx}\|$ exists for all $\hat{\bmx}\in\Fix(T)$, which implies that $\{\bmx^k\}$ is bounded. Hence there exists a subsequence $\{\bmx^{j_k}\}$ of $\{\bmx^k\}$ weakly converges to some $\tilde{\bmx}$. In addition, summing the inequality \eqref{xkdecreasing} for $k=0,1,\cdots,K$ and letting $K$ tend to $+\infty$, we obtain
\begin{align*}\label{Seriesineq}
\eta\sum_{k=0}^{\infty}\|T\bmx^{k}-\bmx^{k}\|^2\leq\|\bmx^0-\hx\|^2.
\end{align*}
Since $\eta>0$, the above inequality yields that $\lim_{k\rightarrow\infty}\|T\bmx^{k}-\bmx^{k}\|=0$,
which implies  $\lim_{k\rightarrow\infty}\|T\bmx^{j_k}-\bmx^{j_k}\|=0$. Then we know from Corollary 4.28 of \cite{bauschke2017convex} that $\tilde{\bmx}\in \Fix(T)$. Now according to Lemma 2.47 of \cite{bauschke2017convex}, we see that $\{\bmx^k\}$ weakly converges to a fixed-point of $T$, which completes the proof.
\end{proof}

\begin{theorem}\label{thm:conv}
Let $T$ be an $\alpha$-averaged nonexpansive operator, where $\alpha\in(0,1)$, and let $\{\bmx^k\}$ be a sequence generated by \eqref{eq:IKMAiter}. If either $\alpha\in(0,1/2]$, $t\in(0,1)$ or $\alpha\in(1/2,1)$, $t\in\left(0,\frac{(1-\alpha)^2+(1-\alpha)\sqrt{(1-\alpha)^2+8\alpha^2}}{4\alpha^2}\right)$ holds, then $\{\bmx^k\}$ weakly converges to a fixed point of $T$.
\end{theorem}
\begin{proof}
We shall employ Lemma \ref{lem:converge} to prove this theorem. Of course, the averaged nonexpansiveness of $T$ implies its nonexpansiveness. It has been shown in Lemma \ref{lem:monotonic} that \eqref{xkdecreasing} holds. Therefore, this theorem follows from Lemma \ref{lem:converge} immediately.
\end{proof}

Next, we analyze the convergence rate of the TKMA. To this end, we extend Lemma 3 of \cite{davis2017convergence} to a more general form, presented in the following Lemma \ref{generalconvergerate}. This generalized lemma is directly applicable to our analysis.

\begin{lemma}\label{generalconvergerate}
Let $\{a_k\}\subset\bbR$ and $\{b_k\}\subset\bbR$ be two nonnegative sequences such that $\sum_{k=0}^{\infty}a_k b_k<+\infty$ and $\sum_{k=0}^{\infty}a_k=+\infty$, and let $\{c_k\}\subset\bbR$ be a positive sequence such that $\lim_{k\rightarrow\infty} c_k=1$ and $\left|\sum_{k=0}^{\infty} \ln c_k\right|<+\infty$. If $b_{k+1}\leq c_kb_k$ for $k\in\bbN$, then $b_k=o\left(\frac{1}{A_k}\right)$, where $A_k:=\sum_{i=0}^{k}a_i$.
\end{lemma}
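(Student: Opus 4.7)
\medskip
\noindent\textbf{Proof proposal.} My plan is to reduce the statement to the monotone case (Lemma~3 of \cite{davis2017convergence}) by rescaling $\{b_k\}$ with the partial products of the factors $\{c_k\}$, thereby turning the almost-monotone recursion $b_{k+1}\leq c_kb_k$ into a genuinely monotone one and then invoking the classical lemma directly.

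Concretely, I would set $P_0:=1$, $P_k:=\prod_{i=0}^{k-1}c_i$ for $k\geq 1$, and $\tilde b_k:=b_k/P_k$. The hypothesis $\left|\sum_{k=0}^{\infty}\ln c_k\right|<+\infty$ says that the series $\sum_k\ln c_k$ converges to a finite value $L$, so $P_k\to e^L>0$; together with the fact that each $P_k>0$, this yields uniform two-sided bounds $0<p\leq P_k\leq P<+\infty$. Dividing the recursion $b_{k+1}\leq c_kb_k$ by $P_{k+1}=c_kP_k$ immediately gives $\tilde b_{k+1}\leq\tilde b_k$, so $\{\tilde b_k\}$ is nonnegative and nonincreasing. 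The lower bound $P_k\geq p$ then yields
\begin{equation*}
\sum_{k=0}^{\infty}a_k\tilde b_k\leq\frac{1}{p}\sum_{k=0}^{\infty}a_kb_k<+\infty,
\end{equation*}
while $\sum a_k=+\infty$ is inherited. Hence $\{\tilde b_k\},\{a_k\}$ satisfy the hypotheses of the monotone-case lemma and I conclude $\tilde b_kA_k=o(1)$; the upper bound $P_k\leq P$ then gives $b_kA_k=P_k\tilde b_kA_k\leq P\tilde b_kA_k=o(1)$, i.e.\ $b_k=o(1/A_k)$.

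The one place to be careful is the verification that the hypothesis $\left|\sum\ln c_k\right|<+\infty$ really does produce uniform two-sided bounds on $P_k$: since the assumption is phrased as convergence of the sum rather than absolute convergence, one has to note that a convergent real series nevertheless has bounded partial sums and a positive exponential limit, yielding both $\inf_k P_k>0$ and $\sup_k P_k<\infty$ at once. After this observation is in hand the change of variable does essentially all the work, and the remainder is a direct application of the classical monotone-sequence lemma, so no further technical obstacle should arise.
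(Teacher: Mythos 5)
Your proof is correct, and it takes a genuinely different route from the paper. The paper reproves the whole statement from scratch: it bounds the partial products $\prod_{k=m}^{n-1}c_k$ by $e^{2M_1}$ (where $M_1$ bounds the partial sums of $\ln c_k$), deduces $b_n\leq M_2 b_m$ and hence $A_nb_n\leq M_2\sum_k a_kb_k$, and then shows $\limsup_k A_kb_k=0$ by a contradiction argument on a subsequence with $A_{n_j}b_{n_j}\geq\delta/2$, played against the vanishing tails of $\sum a_kb_k$. You instead conjugate the perturbation away: dividing by the partial products $P_k$ turns $b_{k+1}\leq c_kb_k$ into genuine monotonicity of $\tilde b_k=b_k/P_k$, the two-sided bounds $0<p\leq P_k\leq P$ (which is exactly what the hypothesis on $\sum\ln c_k$ supplies) transfer the summability hypothesis and the conclusion back and forth, and the monotone summable-sequence lemma finishes. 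Your version is shorter and makes the role of the hypothesis $\left|\sum_k\ln c_k\right|<+\infty$ transparent, whereas the paper's self-contained argument avoids relying on the exact formulation of the cited lemma. That last point is the only thing to tidy up: some published versions of the monotone lemma state the rate as $o\big(1/(A_k-A_{\lceil k/2\rceil})\big)$ rather than $o(1/A_k)$, so you should either check that the version you invoke really concludes $\tilde b_k=o(1/A_k)$ under $\sum a_k=+\infty$, or simply include its two-line proof (for $\varepsilon>0$ pick $m$ with $\sum_{k>m}a_k\tilde b_k<\varepsilon$; then $\tilde b_n(A_n-A_m)<\varepsilon$ for $n>m$, and since $\tilde b_n\to 0$ and $A_n\to\infty$ this gives $\limsup_n A_n\tilde b_n\leq\varepsilon$). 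With that detail pinned down, your argument is complete.
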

\begin{proof}
We first prove the boundedness of the sequence $\{A_kb_k\}$. Since $\left|\sum_{k=0}^{\infty} \ln c_k\right|<+\infty$, there exists $M_1>0$ such that $|\sum_{k=0}^{n} \ln c_k|\leq M_1$ for all $n\in\bbN$. Let $S_n:=\sum_{k=0}^{n} \ln c_k$ and $M_2:=e^{2M_1}$. We have $|S_n|\leq M_1$ and $M_2>1$. Hence, for any $m<n$,
\begin{equation*}
\prod_{k=m}^{n-1}c_k=\frac{\prod_{k=0}^{n-1}c_k}{\prod_{k=0}^{m-1}c_k}=e^{S_{n-1}-S_{m-1}}\leq e^{|S_{n-1}-S_{m-1}|}\leq M_2.
\end{equation*}
Since $b_{n+1}\leq c_nb_n$, by induction, we get that for any $m<n$,
\begin{equation*}\label{bnbound}
b_{n}\leq b_m\prod_{k=m}^{n-1}c_k\leq b_m M_2.
\end{equation*}
The non-negativity of $\{a_k\}$ and the fact $M_2>1$ yield that for any $k\leq n$,
\begin{equation}\label{akbkineq}
a_{k}b_n\leq M_2a_{k}b_{k}.
\end{equation}
Since $\sum_{k=0}^{\infty}a_k b_k<+\infty$, there exists $M_3>0$ such that $\sum_{k=0}^{n}a_k b_k\leq M_3$ for all $n\in\bbN$. Summing \eqref{akbkineq} for $k=0,\cdots,n$, we obtain that
\begin{equation*}
A_nb_n\leq M_2\sum_{k=0}^{n}a_{k}b_{k}\leq M_2M_3,
\end{equation*}
which implies that the nonnegative sequence $\{A_kb_k\}$ is bounded.

The fact $\sum_{k=0}^{\infty}a_k b_k<+\infty$ also implies that for any $\varepsilon>0$, there exists $N\in\bbN$ such that
\begin{equation}\label{akbkleq}
\sum_{k=N+1}^{n}a_kb_k<\varepsilon,
\end{equation}
for all $n>N$. The boundedness of $\{A_kb_k\}$ implies the existence of $\limsup\limits_{k\to\infty}A_kb_k<+\infty$, and we denote it by $\delta$. Of course, $\delta\geq 0$.

We now prove that $\delta=0$. Suppose, to get a contradiction, that $\delta>0$. Then there exists a subsequence $\{n_j\}$ of $\{n\}$ such that
\begin{equation}\label{geq:Anjbnj}
A_{n_j}b_{n_j}\geq\frac{\delta}{2}
\end{equation}
Since $\lim_{n\to\infty}A_n=+\infty$, we have $\lim_{j\to\infty}A_{n_j}=+\infty$, which means that for any $N'\in\bbN$, there exists sufficiently large $J$ such that $A_{n_j}>2A_{N'}$, that is,
\begin{equation}\label{AnAN}
A_{n_j}-A_{N'}\geq\frac{1}{2}A_{n_j}
\end{equation}
for all $j\geq J$ and $n_j>N'$. It follows from \eqref{akbkineq} that for any $k\leq n_j$,
\begin{equation}\label{akbkineq1}
a_{k}b_{n_j}\leq M_2a_{k}b_{k}.
\end{equation}
Summing \eqref{akbkineq1} for $k=N'+1,\cdots,n_j$, we obtain that
\begin{equation*}
\sum_{k=N'+1}^{n_j}a_kb_{n_j}\leq M_2\sum_{k=N'+1}^{n_j}a_{k}b_{k},
\end{equation*}
which together with \eqref{geq:Anjbnj} and \eqref{AnAN} yields that
\begin{align*}
\sum_{k=N'+1}^{n_j}a_{k}b_{k}&\geq\frac{1}{ M_2}\sum_{k=N'+1}^{n_j}a_kb_{n_j}\\
&=\frac{1}{M_2}b_{n_j}(A_{n_j}-A_{N'})\\
&\geq\frac{1}{2M_2}A_{n_j}b_{n_j}\\
&\geq\frac{1}{4M_2}\delta
\end{align*}
for $j\geq J$. This contradicts \eqref{akbkleq} with $\varepsilon=\frac{1}{4M_2}\delta$, and hence $\delta=0$.

It follows from the non-negativity of $\{A_kb_k\}$ that $\liminf\limits_{k\to\infty}A_kb_k\geq0$, which together with  $\limsup\limits_{k\to\infty}A_kb_k=0$ yields $\lim\limits_{k\rightarrow\infty}A_kb_k=0$. Since $\lim\limits_{k\rightarrow\infty}A_k=+\infty$, we have $\lim\limits_{k\rightarrow\infty}b_k=0$. As a result, we can get $b_k=o\left(\frac{1}{A_k}\right)$.
\end{proof}

We next establish a lemma and a proposition that will serve as important tools in the analysis of convergence rate.

\begin{lemma}\label{T2TtheOth}
Let $T$ be an $\alpha$-averaged nonexpansive operator, $\alpha\in(0,1)$, and let $\{\bmx^k\}$ be a sequence generated by \eqref{eq:IKMAiter}, where $\{\theta_k\}$ is defined by \eqref{IKMAthetak}. Then
$$
\langle T\bmx^k-\bmx^k,T^2\bmx^k-T^{\theta_{k}}\bmx^k\rangle=0,\ \ k\in\bbN.
$$
\end{lemma}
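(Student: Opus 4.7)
The plan is direct: unpack the definition of $T^{\theta_k}$ and then substitute the formula defining $\theta_k$. There is no real obstacle, since the identity is an algebraic consequence of how $\theta_k$ was chosen.

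First I would use \eqref{def:Tthetak} to rewrite
\[
T^2\bmx^k - T^{\theta_k}\bmx^k = (T^2\bmx^k - T\bmx^k) - \theta_k(T\bmx^k - \bmx^k).
\]
Taking the inner product with $T\bmx^k - \bmx^k$ and using bilinearity then gives
\[
\langle T\bmx^k-\bmx^k, T^2\bmx^k - T^{\theta_k}\bmx^k\rangle
 = \langle T\bmx^k-\bmx^k, T^2\bmx^k - T\bmx^k\rangle - \theta_k\|T\bmx^k-\bmx^k\|^2.
\]

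Next I would substitute the definition \eqref{IKMAthetak} of $\theta_k$, which is precisely
\[
\theta_k\|T\bmx^k - \bmx^k\|^2 = -\langle T\bmx^k - \bmx^k, T\bmx^k - T^2\bmx^k\rangle = \langle T\bmx^k - \bmx^k, T^2\bmx^k - T\bmx^k\rangle.
\]
Plugging this back cancels the two terms and yields $0$, as claimed. (The standing assumption $T\bmx^k\ne\bmx^k$ ensures $\theta_k$ is well defined, so no case distinction is needed.) The only thing worth emphasizing is the geometric meaning: $\theta_k$ was constructed exactly so that the momentum direction $T\bmx^k - \bmx^k$ is orthogonal to the residual $T^2\bmx^k - T^{\theta_k}\bmx^k$, mirroring the optimality condition derived in Theorem \ref{thm_KMaccel} with $\hx$ replaced by $T^2\bmx^k$.
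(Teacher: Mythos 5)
Your proof is correct and follows essentially the same route as the paper's: expand $T^{\theta_k}\bmx^k=(1+\theta_k)T\bmx^k-\theta_k\bmx^k$, take the inner product with $T\bmx^k-\bmx^k$, and substitute the definition of $\theta_k$ to cancel the two terms. No gaps; the closing remark on the geometric meaning is a nice addition but not needed.
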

\begin{proof}
By the definition of $T^{\theta_{k}}$ in \eqref{def:Tthetak}, we have
\begin{align*}
T^2\bmx^k-T^{\theta_{k}}\bmx^k=T^2\bmx^k-T\bmx^k-\theta_k(T\bmx^k-\bmx^k).
\end{align*}
Then
$$
\langle T\bmx^k-\bmx^k,T^2\bmx^k-T^{\theta_{k}}\bmx^k\rangle=\langle T\bmx^k-\bmx^k,T^2\bmx^k-T\bmx^k\rangle-\theta_k\|T\bmx^k-\bmx^k\|^2.
$$
Substituting $\theta_k=\frac{-\langle T\bmx^k-\bmx^k,T\bmx^k-T^2\bmx^k\rangle}{\|T\bmx^k-\bmx^k\|^2}$ into the above equality, we immediately obtain the desired result.
\end{proof}

We remark that Lemma \ref{T2TtheOth} indicates the orthogonality of $T\bmx^k-\bmx^k$ and $T^2\bmx^k-T^{\theta_{k}}\bmx^k$. In addition to this property, to establish the convergence rate of TKMA, we still need the following lemma.
\begin{lemma}\label{Tthetamoto}
Let $T$ be an $\alpha$-averaged nonexpansive operator, $\alpha\in(0,1/2]$, and let $\{\bmx^k\}$ be a sequence generated by \eqref{eq:IKMAiter}, where
$\{\theta_k\}$ is defined by \eqref{IKMAthetak}. Then
$$
\left\|T^{\theta_{k+1}}\bmx^{k+1}-T^{\theta_{k}}\bmx^{k}\right\|\leq(\tau_k+|1-\tau_k|)\left\|\bmx^{k+1}-\bmx^k\right\|,\ \ k\in\bbN,
$$
where $\tau_k:=\frac{1+\theta_{k+1}}{1+\theta_{k}}$.
\end{lemma}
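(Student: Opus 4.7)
The plan is to reduce the claim to two nonexpansive-type bounds by first rewriting $T^{\theta_{k+1}}\bmx^{k+1}$ as an affine combination of $T^{\theta_k}\bmx^{k+1}$ and $\bmx^{k+1}$. Using $T^{\theta}=(1+\theta)T-\theta\mI$ together with the defining relation $\tau_k(1+\theta_k)=1+\theta_{k+1}$ (which also forces $\tau_k\theta_k+(\tau_k-1)=\theta_{k+1}$), a short calculation gives
\[
T^{\theta_{k+1}}\bmx^{k+1}=\tau_k\,T^{\theta_k}\bmx^{k+1}+(1-\tau_k)\bmx^{k+1}.
\]
Subtracting $T^{\theta_k}\bmx^k=\tau_k T^{\theta_k}\bmx^k+(1-\tau_k)T^{\theta_k}\bmx^k$ from both sides produces the decomposition
\[
T^{\theta_{k+1}}\bmx^{k+1}-T^{\theta_k}\bmx^k=\tau_k\bigl(T^{\theta_k}\bmx^{k+1}-T^{\theta_k}\bmx^k\bigr)+(1-\tau_k)\bigl(\bmx^{k+1}-T^{\theta_k}\bmx^k\bigr).
\]
Since $\theta_k\in[1-2\alpha,1]\subset[0,1]$ by Proposition \ref{prop:thetarange}(i), $\tau_k>0$, so an application of the triangle inequality reduces the lemma to bounding each of $\|T^{\theta_k}\bmx^{k+1}-T^{\theta_k}\bmx^k\|$ and $\|\bmx^{k+1}-T^{\theta_k}\bmx^k\|$ by $\|\bmx^{k+1}-\bmx^k\|$.

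The first of these bounds is immediate from Proposition \ref{prop:thetarange}(ii): $T^{\theta_k}$ is either nonexpansive or $\alpha(1+\theta_k)$-averaged nonexpansive, and hence nonexpansive in either case. The second bound is where the orthogonality established in Lemma \ref{T2TtheOth} does the real work. Using the update rule $\bmx^{k+1}=(1-t)T^2\bmx^k+tT^{\theta_k}\bmx^k$, I will write
\[
\bmx^{k+1}-T^{\theta_k}\bmx^k=(1-t)\bigl(T^2\bmx^k-T^{\theta_k}\bmx^k\bigr),
\]
and observe that $\bmx^{k+1}-\bmx^k$ splits as $(1-t)(T^2\bmx^k-T^{\theta_k}\bmx^k)+(T^{\theta_k}\bmx^k-\bmx^k)$. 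Because $T^{\theta_k}\bmx^k-\bmx^k=(1+\theta_k)(T\bmx^k-\bmx^k)$ is a scalar multiple of $T\bmx^k-\bmx^k$, Lemma \ref{T2TtheOth} makes the two summands orthogonal, and the Pythagorean theorem yields $\|\bmx^{k+1}-\bmx^k\|^2=\|\bmx^{k+1}-T^{\theta_k}\bmx^k\|^2+\|T^{\theta_k}\bmx^k-\bmx^k\|^2$, from which $\|\bmx^{k+1}-T^{\theta_k}\bmx^k\|\leq\|\bmx^{k+1}-\bmx^k\|$ follows at once.

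The only nonobvious move is spotting the affine identity at the opening; everything after it is a short combination of Proposition \ref{prop:thetarange}(ii) with the Pythagorean/orthogonality structure of Lemma \ref{T2TtheOth}. The appearance of $|1-\tau_k|$ in the statement simply records that $1-\tau_k$ may have either sign (whereas $\tau_k$ itself is strictly positive), so no case split is truly needed beyond the absolute value that the triangle inequality automatically produces.
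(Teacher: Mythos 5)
Your proposal is correct and follows essentially the same route as the paper's proof: the affine identity $T^{\theta_{k+1}}=\tau_k T^{\theta_k}+(1-\tau_k)\mI$, the resulting two-term decomposition, nonexpansiveness of $T^{\theta_k}$ from Proposition \ref{prop:thetarange}(ii) for the first term, and the orthogonality from Lemma \ref{T2TtheOth} with the Pythagorean identity to show $\|\bmx^{k+1}-T^{\theta_k}\bmx^k\|=(1-t)\|T^2\bmx^k-T^{\theta_k}\bmx^k\|\leq\|\bmx^{k+1}-\bmx^k\|$ for the second. The only difference is the order of presentation, which does not affect the argument.
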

\begin{proof}
By the definition of $T^{\theta_{k}}$ in \eqref{def:Tthetak}, we have
$$
T^{\theta_{k}}\bmx^k-\bmx^k=(1+\theta_k)(T\bmx^k-\bmx^k),
$$
which together with Lemma \ref{T2TtheOth} implies that
$$
\langle T^2\bmx^k-T^{\theta_{k}}\bmx^k,T^{\theta_{k}}\bmx^k-\bmx^k\rangle=(1+\theta_k)\langle T^2\bmx^k-T^{\theta_{k}}\bmx^k,T\bmx^k-\bmx^k\rangle=0.
$$
According to the definition of $\bmx^{k+1}$ in \eqref{eq:IKMAiter}, we then obtain that
\begin{align*}
\left\|\bmx^{k+1}-\bmx^k\right\|^2&=\left\|(1-t)(T^2\bmx^k-T^{\theta_{k}}\bmx^k)+(T^{\theta_{k}}\bmx^k-\bmx^k)\right\|^2\\
&=(1-t)^2\left\|T^2\bmx^k-T^{\theta_{k}}\bmx^k\right\|^2+\left\|T^{\theta_{k}}\bmx^k-\bmx^k\right\|^2,
\end{align*}
which implies that
\begin{align}\label{xk1Ttheta}
\left\|\bmx^{k+1}-T^{\theta_{k}}\bmx^k\right\|\notag
&=\left\|(1-t)T^2\bmx^k+tT^{\theta_{k}}\bmx^k-T^{\theta_{k}}\bmx^k\right\|\\\notag
&=(1-t)\left\|T^2\bmx^k-T^{\theta_{k}}\bmx^k\right\|\notag\\
&\leq\|\bmx^{k+1}-\bmx^k\|.
\end{align}
Since $\alpha\in(0,1/2]$, it follows from item $(ii)$ of Proposition \ref{prop:thetarange} that \begin{equation}\label{neq:nonexpanTthetak}
\left\|T^{\theta_{k}}\bmx^{k+1}-T^{\theta_{k}}\bmx^{k}\right\|\leq\|\bmx^{k+1}-\bmx^{k}\|.
\end{equation}
Since $\tau_k:=\frac{1+\theta_{k+1}}{1+\theta_{k}}>0$, we have
\begin{align*}
T^{\theta_{k+1}}&=(1+\theta_{k+1})T-\theta_{k+1}\mI\\
&=\tau_k(1+\theta_{k})T-\left(\tau_k(1+\theta_k)-1\right)\mI\\
&=\tau_k T^{\theta_{k}}+(1-\tau_k)\mI,
\end{align*}
which together with inequalities  \eqref{xk1Ttheta} and \eqref{neq:nonexpanTthetak} yields that
\begin{align*}
\left\|T^{\theta_{k+1}}\bmx^{k+1}-T^{\theta_{k}}\bmx^{k}\right\|&=\left\|\tau_k T^{\theta_{k}}\bmx^{k+1}+(1-\tau_k)\bmx^{k+1}-T^{\theta_{k}}\bmx^{k}\right\|\\
&=\left\|\tau_k (T^{\theta_{k}}\bmx^{k+1}-T^{\theta_{k}}\bmx^{k})+(1-\tau_k)(\bmx^{k+1}-T^{\theta_{k}}\bmx^{k})\right\|\\
&\leq\tau_k\left\|T^{\theta_{k}}\bmx^{k+1}-T^{\theta_{k}}\bmx^{k}\right\|+|1-\tau_k|\left\|\bmx^{k+1}-T^{\theta_{k}}\bmx^{k}\right\|\\
&\leq(\tau_k+|1-\tau_k|)\|\bmx^{k+1}-\bmx^k\|.
\end{align*}
This completes the proof.
\end{proof}

\begin{theorem}\label{convergerate}
Let $T$ be an $\alpha$-averaged nonexpansive operator, $\alpha\in(0,1/2]$, and let $\{\bmx^k\}$ be a sequence generated by \eqref{eq:IKMAiter}, where
$\{\theta_k\}$ is defined by \eqref{IKMAthetak}. If $\sum_{k=0}^{\infty} |\theta_{k+1}-\theta_k|<+\infty$, then
\begin{equation*}
\|\bmx^{k+1}-\bmx^k\|=o(\frac{1}{\sqrt{k}}).
\end{equation*}
\end{theorem}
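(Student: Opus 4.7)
The plan is to apply Lemma \ref{generalconvergerate} with $a_k:=1$ and $b_k:=\|\bmx^{k+1}-\bmx^k\|^2$, so that $A_k=k+1$ and the conclusion $b_k=o(1/A_k)$ yields exactly the desired $\|\bmx^{k+1}-\bmx^k\|=o(1/\sqrt{k})$. This requires three ingredients: $\sum_k b_k<\infty$, a recursion $b_{k+1}\leqs c_k b_k$, and verification that $c_k\to 1$ with bounded logarithmic partial sums.

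For $\sum_k b_k<\infty$, I would exploit the orthogonality $\langle T\bmx^k-\bmx^k,T^2\bmx^k-T^{\theta_k}\bmx^k\rangle=0$ from Lemma \ref{T2TtheOth} (together with $T^{\theta_k}\bmx^k-\bmx^k=(1+\theta_k)(T\bmx^k-\bmx^k)$) to obtain the Pythagorean identity
\begin{equation*}
\|\bmx^{k+1}-\bmx^k\|^2=(1-t)^2\|T^2\bmx^k-T^{\theta_k}\bmx^k\|^2+(1+\theta_k)^2\|T\bmx^k-\bmx^k\|^2.
\end{equation*}
Using the nonexpansiveness of $T$ to bound $\|T^2\bmx^k-T\bmx^k\|\leqs\|T\bmx^k-\bmx^k\|$, and the fact that $|\theta_k|\leqs 1$ from Proposition \ref{prop:thetarange}(i), each term is controlled by a constant multiple of $\|T\bmx^k-\bmx^k\|^2$, which is summable by Theorem \ref{thm:conv}(i).

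For the recursion, I would write $\bmx^{k+2}-\bmx^{k+1}=(1-t)(T^2\bmx^{k+1}-T^2\bmx^k)+t(T^{\theta_{k+1}}\bmx^{k+1}-T^{\theta_k}\bmx^k)$, apply nonexpansiveness of $T^2$ to the first term and Lemma \ref{Tthetamoto} to the second, yielding
\begin{equation*}
\|\bmx^{k+2}-\bmx^{k+1}\|\leqs\bigl[(1-t)+t(\tau_k+|1-\tau_k|)\bigr]\|\bmx^{k+1}-\bmx^k\|,
\end{equation*}
with $\tau_k=(1+\theta_{k+1})/(1+\theta_k)$. I define $c_k$ as the square of this bracket, giving $b_{k+1}\leqs c_k b_k$.

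The main technical step is then controlling $c_k$. The crucial observation is that $\tau_k+|1-\tau_k|\geqs 1$ always (both when $\tau_k\geqs 1$ and when $0<\tau_k<1$), so $c_k\geqs 1$ and hence $\ln c_k\geqs 0$; this trivializes the lower tail of $|\sum\ln c_k|$. For the upper tail, Proposition \ref{prop:thetarange}(i) gives $1+\theta_k\geqs 2-2\alpha>0$ (or, in the edge case $\alpha=1/2$, I would verify $1+\theta_k\geqs 1$ separately), so $|1-\tau_k|\leqs |\theta_{k+1}-\theta_k|/(2-2\alpha)$. Hence $c_k\leqs(1+C|\theta_{k+1}-\theta_k|)^2$ for a constant $C$, and $\ln c_k\leqs 2C|\theta_{k+1}-\theta_k|$. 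The summability assumption $\sum_k|\theta_{k+1}-\theta_k|<\infty$ then gives both $c_k\to 1$ and $\sum_k\ln c_k<\infty$. All hypotheses of Lemma \ref{generalconvergerate} are satisfied, so $b_k=o(1/k)$ and the theorem follows. I expect the careful bookkeeping of the lower bound on $1+\theta_k$ and the inequality $\tau_k+|1-\tau_k|\geqs 1$ to be the subtlest point.
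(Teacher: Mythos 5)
Your proposal is correct and follows essentially the same route as the paper: the same instantiation of Lemma \ref{generalconvergerate} with $a_k=1$, $b_k=\|\bmx^{k+1}-\bmx^k\|^2$, the same recursion $b_{k+1}\leq c_kb_k$ obtained from the nonexpansiveness of $T^2$ together with Lemma \ref{Tthetamoto}, and the same control of $c_k$ via $|1-\tau_k|=|\theta_{k+1}-\theta_k|/(1+\theta_k)$ with $1+\theta_k\geq 2-2\alpha\geq 1$. The only (harmless) differences are that the paper gets $\sum_k b_k<+\infty$ by a direct triangle inequality on the decomposition $(1-t)(T^2\bmx^k-T\bmx^k)+(1+t\theta_k)(T\bmx^k-\bmx^k)$ rather than your orthogonality-based Pythagorean identity, and it establishes $\big|\sum_k\ln c_k\big|<+\infty$ by a limit-comparison argument rather than your more elementary $\ln(1+x)\leq x$ bound.
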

\begin{proof}
We shall employ Lemma \ref{generalconvergerate} to prove this theorem. It follows from item $(i)$ of Proposition \ref{prop:thetarange} that $\theta_k\in[1-2\alpha,1]$, which together with $t\in(0,1)$ and $\alpha\in(0,1/2]$ implies that
\begin{equation}\label{tthetakscope}
1+t\theta_k>0, \ \ 2-t+t\theta_k\leq2.
\end{equation}
The averaged nonexpansiveness of $T$ implies its
nonexpansiveness, and hence
\begin{equation}\label{Tnonexpan}
\left\|T^2\bmx^k-T\bmx^k\right\|\leq\|T\bmx^k-\bmx^k\|.
\end{equation}
From the proof of Lemma \ref{lem:converge}, we can get
\begin{equation}\label{sumTxkbounded}
\sum_{k=0}^{\infty}\|T\bmx^{k}-\bmx^{k}\|^2<+\infty.
\end{equation}
According to the definition of $\bmx^{k+1}$ in \eqref{eq:IKMAiter}, and then using \eqref{tthetakscope} and \eqref{Tnonexpan}, we have
\begin{align*}
\left\|\bmx^{k+1}-\bmx^{k}\right\|&=\left\|(1-t)(T^2\bmx^k-T\bmx^k)+(1+t\theta_k)(T\bmx^k-\bmx^k)\right\|\\
&\leq(1-t)\left\|T^2\bmx^k-T\bmx^k\right\|+(1+t\theta_k)\|T\bmx^k-\bmx^k\|\\
&\leq2\|T\bmx^k-\bmx^k\|,
\end{align*}
 which together with \eqref{sumTxkbounded} implies
$$
\sum_{k=0}^{\infty}\|\bmx^{k+1}-\bmx^{k}\|^2<+\infty.
$$
Now we let $a_k:=1$ and $b_k:=\|\bmx^{k+1}-\bmx^{k}\|^2$ in Lemma \ref{generalconvergerate}. Then $\sum_{k=0}^{\infty}a_k b_k<+\infty$ and $\sum_{k=0}^{\infty}a_k=+\infty$.

Since $T$ is nonexpansive, it is obvious that $T^2$ is also nonexpansive, and hence
\begin{equation}\label{T2nonexpan}
\left\|T^2\bmx^{k+1}-T^2\bmx^k\right\|\leq\|\bmx^{k+1}-\bmx^k\|.
\end{equation}
According to the definition of $\bmx^{k+1}$ in \eqref{eq:IKMAiter}, and using \eqref{T2nonexpan} and Lemma \ref{Tthetamoto}, we obtain that
\begin{align*}
\left\|\bmx^{k+2}-\bmx^{k+1}\right\|&=\left\|(1-t)(T^2\bmx^{k+1}-T^2\bmx^{k})+t(T^{\theta_{k+1}}\bmx^{k+1}-T^{\theta_{k}}\bmx^{k})\right\|\\
&\leq(1-t)\left\|\bmx^{k+1}-\bmx^{k}\right\|+t(\tau_k+|1-\tau_k|)\left\|\bmx^{k+1}-\bmx^{k}\right\|.
\end{align*}
Let $d_k:=\frac{2t|\theta_{k+1}-\theta_k|}{1+\theta_k}$. It is easy to see from the definition of $\tau_k$ that
$$
(1-t)+t(\tau_k+|1-\tau_k|)\leq1+d_k.
$$
Then
\begin{equation}\label{neq:xk1mxkapprdecr}
\left\|\bmx^{k+2}-\bmx^{k+1}\right\|\leq (1+d_k)\left\|\bmx^{k+1}-\bmx^{k}\right\|.
\end{equation}
The fact $\sum_{k=0}^{\infty} |\theta_{k+1}-\theta_k|<+\infty$ implies that
\begin{equation}\label{limthetakplus1}
\lim _{k\rightarrow\infty}|\theta_{k+1}-\theta_k|=0.
\end{equation}
By item $(i)$ of Proposition \ref{prop:thetarange}, we have
\begin{equation*}
t|\theta_{k+1}-\theta_k|\leq d_k\leq\frac{t|\theta_{k+1}-\theta_k|}{1-\alpha}.
\end{equation*}
Then according to \eqref{limthetakplus1}, and using the squeeze theorem, we see that $\lim_{k\to\infty}d_k=0$. Let $c_k:=\left(1+d_k\right)^2$ in Lemma \ref{generalconvergerate}. Of course,
$$
\lim _{k\rightarrow\infty}c_k=1.
$$
Moreover, the inequality \eqref{neq:xk1mxkapprdecr} can be rewritten as
\begin{align*}
b_{k+1}\leq c_kb_{k}.
\end{align*}
Since $d_k\geq0$, we have $\ln c_k\geq0$. To prove that $\left|\sum_{k=0}^{\infty} \ln c_k\right|$ converges, it suffices to show the convergence of $\sum_{k=0}^{\infty} \ln c_k$. We first note that if $\ln c_k=0$ for some $k$, the corresponding term contributes zero to the series $\sum_{k=0}^{\infty} \ln c_k$. Therefore, the convergence depends only on the terms for which $\ln c_k>0$. In this case, since $\lim_{k\rightarrow\infty}\frac{\ln c_k}{2d_k+d_k^2}=1$, according to Theorem 8.21 of \cite{apostol1958mathematical}, if $\sum_{k=0}^{\infty} 2d_k+d_k^2$ converges, then $\sum_{k=0}^{\infty} \ln c_k$ also converges. The fact $\sum_{k=0}^{\infty} |\theta_{k+1}-\theta_k|<+\infty$ implies that $\sum_{k=0}^{\infty}d_k<+\infty$. Then it follows from Exercise 8.25 (a) of \cite{apostol1958mathematical} that $\sum_{k=0}^{\infty}d_k^2<+\infty$, and hence $\sum_{k=0}^{\infty} 2d_k+d_k^2$ converges. Therefore, we have $|\sum_{k=0}^{\infty} \ln c_k|<+\infty$. As a result, by Lemma \ref{generalconvergerate}, we have $\|\bmx^{k+1}-\bmx^{k}\|^2=o(\frac{1}{k})$. This completes the proof.
\end{proof}

\section{Numerical results}
In this section, we demonstrate the performance of the proposed algorithm through two numerical examples. Specifically, we compare the TKMA with the FPPA, the Proximal Gradient Algorithm (PGA), the Fast KM algorithm \eqref{FastKM} and the Halpern algorithm \eqref{tradHalpern}. In section \ref{ImageDenoising}, we address the TV regularized image denoising problem \eqref{TVdenoise}. Then, in section \ref{lowrank}, we consider a low-rank matrix completion problem, which frequently arises in applications such as collaborative filtering and image inpainting.

All experiments were conducted in MATLAB on a desktop running Windows 11 (64-bit), equipped with an Intel Core i9-14900K processor (3.20 GHz), 32 GB of DDR5 RAM, and a NVIDIA GeForce RTX 5090. Below, we present the figure-of-merits employed to evaluate and compare the performance of the algorithms. For the image denoising problem, we consider three metrics: the objective function value (OFV), the peak signal-to-noise ratio (PSNR) and the fixed-point (FP) residual $\|T\bmx^k-\bmx^{k}\|$. The PSNR is defined as
$$
\text{PSNR}(\bmx^k):=10\cdot\log_{10}\left(\frac{\text{MAX}^2}{\text{MSE}(\bmx^k)}\right),
$$
where MAX denotes the maximum possible pixel value of the image. This metric evaluates the quality of a reconstructed image compared to the ground truth. For the low-rank matrix completion problem, in addition to the OFV and the FP residual, we also employ the relative error (RE), defined as
$$
\text{RE}(\bmX^k):=\frac{\|\bmX^k-\bmX^{k-1}\|_F}{\|\bmX^k\|_F}.
$$
A steadily diminishing RE indicates the progressive convergence of the algorithm.

\subsection{Image Denoising Problem}\label{ImageDenoising}
In this subsection, we consider the TV regularized image denoising problem defined in \eqref{TVdenoise}. The first-order difference matrix $\bmB\in\bbR^{2d\times d}$ in the regularization term $\phi(\bmB\bmu)$ is defined by
$
\bmB:=\left[\begin{array}{c}
\bmI_N\otimes \bmD \\
\bmD\otimes \bmI_N
\end{array}\right],
$
where $\otimes$ denotes the Kronecker product, $N=\sqrt{d}$, $\bmI_N$ is the $N\times N$ identity matrix, and $\bmD$ is the $N\times N$ backward difference matrix with $\bmD_{j,j}=1$, $\bmD_{j,j-1}=-1$ for $j=2,3,\ldots,N$, and all other entries being zero.

As discussed in Section \ref{FFO}, the optimization problem \eqref{TVdenoise} can be solved by applying the fixed-point iteration of operator $T_2$ defined in \eqref{operatorTfordenoising}, namely,\vspace{-0.5em}
\begin{equation*}
\bmy^k=T_2 \bmy^{k-1}.
\end{equation*}
\vspace{-0.5em}The complete FPPA for solving problem \eqref{TVdenoise} is summarized as follows.

\hspace{-1.8em}\rule[0em]{13cm}{0.1em}\\\vspace{-0.5em}
\text{{\bf FPPA for TV denoising}}\\
\hspace{-1.5em}\rule[0em]{13cm}{0.1em}\\
{\bf Input:} Given the noisy image $\bmx$; the\ regularization parameter $\mu>0$; $\lambda<\frac{1}{4}$; the number of iterations $K$.

\hspace{-1.85em}{\bf Initialization:} $\bmy^0=\bmB\bmx$ and $k=1$.

\hspace{-1.85em}{\bf repeat}

$\bmy^k=\left(\mI-\prox_{\frac{1}{\lambda}\phi}\right)\left(\bmB\bmx+(\bmI_{2d}-\lambda \bmB\bmB^{\top})\bmy^{k-1}\right)$

$k=k+1$

\hspace{-1.85em}{\bf until} $k>K$

\hspace{-1.85em}{\bf Output}: The denoised image $\bmu=\bmx-\lambda \bmB^{\top}\bmy^K$.

\vspace{-0.5em}

\hspace{-1.8em}\rule[0em]{13cm}{0.1em}

By Proposition 12.28 of \cite{bauschke2017convex}, $\mI-\prox_{\frac{1}{\lambda}\phi}$ is firmly nonexpansive, which is $1/2$-averaged nonexpansive from Remark 4.34 of \cite{bauschke2017convex}. Thus operator $T_2$ defined by \eqref{operatorTfordenoising} is averaged nonexpansive when $\lambda<\frac{1}{4}$. According to \eqref{eq:IKMAiter}, the complete iterative scheme of the TKMA for solving model \eqref{TVdenoise} is given as follows.

\hspace{-1.8em}\rule[0em]{13cm}{0.1em}\\\vspace{-0.5em}
\text{{\bf TKMA for TV denoising}}\\
\hspace{-1em}\rule[0em]{13cm}{0.1em}\\
{\bf Input:} Given the noisy image $\bmx$; the\ regularization parameter $\mu>0$; $\lambda<\frac{1}{4}$; the combination coefficient $t$; the number of iterations $K$.

\hspace{-1.85em}{\bf Initialization:} $\bmy^0=\bmB\bmx$ and $k=1$.

\hspace{-1.85em}{\bf repeat}

$\tilde{\bmy}^k = \left(\mI-\prox_{\frac{1}{\lambda}\varphi}\right)\left(\bmB\bmx+(\bmI_{2d}-\lambda \bmB\bmB^{\top})\bmy^{k-1}\right)$

$\bmz^k = \left(\mI-\prox_{\frac{1}{\lambda}\varphi}\right)\left(\bmB\bmx+(\bmI_{2d}-\lambda \bmB\bmB^{\top})\tilde{\bmy}^k\right)$

$\theta_k = \frac{-\langle \tilde{\bmy}^k-\bmy^{k-1},\tilde{\bmy}^k-\bmz^k\rangle}{\|\tilde{\bmy}^k-\bmy^{k-1}\|^2}$

$\bmv^k=(1+\theta_k)\tilde{\bmy}^k-\theta_k \bmy^{k-1}$

$\bmy^k = (1-t)\bmz^k+t \bmv^k$

$k=k+1$

\hspace{-1.85em}{\bf until} $k>K$

\hspace{-1.85em}{\bf Output}: The denoised image $\bmu=\bmx-\lambda \bmB^{\top}\bmy^K$.

\vspace{-0.4em}
\hspace{-1.8em}\rule[0em]{13cm}{0.1em}
\vspace{-1.2em}

In our experiments, the $256\times256$ images of `Cameraman' and `Lighthouse' serve as the original images  $\bmh$. The noisy images are modeled as\vspace{-1em}
\begin{equation*}
\bmx=\bmh+\bm\eta\vspace{-0.8em}
\end{equation*}
with $\bm\eta\thicksim N(\bm0,\sigma^2\bmI)$ represents Gaussian noise. We apply each algorithm to the noisy image of `Cameraman' and `Lighthouse' with noise level $\sigma=15$ and $\sigma=25$, respectively. For the `Cameraman' image, the parameters $\mu$ and $\lambda$ are set to $10$ and $0.999/4$, respectively. For the `Lighthouse' image, the parameters $\mu$ and $\lambda$ are selected as $17$ and $0.999/4$, respectively. In our TKMA, the parameter $t$ is chosen as $1/2$. For the FPPA and the Halpern algorithm (with $\lambda_k=\frac{1}{k+1}$ \cite{halpern1967fixed} or $\lambda_k=\frac{1}{\varphi_k+1}$ \cite{he2024convergence}), no additive algorithmic parameters need to be adjusted. For the Fast KM algorithm, the parameters $s$ and $\alpha$ are set to $1$ and $50$, respectively, based on our fine-tuning. In the competing algorithms, the primary computational cost is concentrated on the operation of operator $T_2$, while the number of operations of $T_2$ in each iteration varies across different algorithms. As an example, a single iteration in TKMA is composed of two operations of $T_2$. To ensure a fair comparison, denoised images are presented after an equal number of the operations of $T_2$ across all methods. We also evaluate performance by plotting PSNR and OFV against CPU time, which provides a practical measure of computational efficiency.

We first  evaluate the performance of each algorithm on denoising the noisy `Cameraman' image with noise level $\sigma=15$. In Figure \ref{denoisedfigure}, we show the original image, the noisy image, and the denoised images from each algorithm following 20 iterations of $T_2$. Figure \ref{sigma15PSNRandOFV} plots the PSNR and OFV against CPU time. These results demonstrate that TKMA outperforms the other algorithms, achieving higher PSNR values and lower objective function values in less time.

\begin{figure*}[htbp]
\centering
\begin{minipage}[c]{0.25\textwidth}
    \centering
    \includegraphics[width=\textwidth]{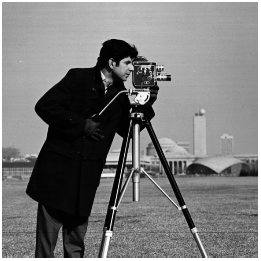}
    \\
    (a) Original image
\end{minipage}%
\hfill
\begin{minipage}[c]{0.7\textwidth}
    \centering
    \begin{minipage}[b]{\textwidth}
        \centering
        \includegraphics[width=0.32\textwidth]{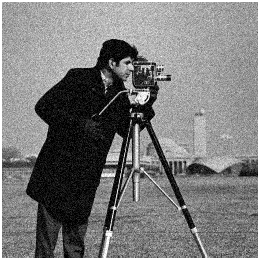}
        \includegraphics[width=0.32\textwidth]{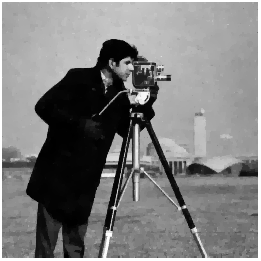}
        \includegraphics[width=0.32\textwidth]{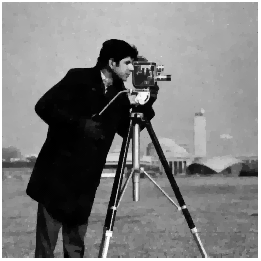}
        \\
        \hspace{-1.5em}(b) Noisy image  \ \  \quad\quad \ (c) TKMA \ \ \quad\quad\quad \  (d) FPPA  \quad
    \end{minipage}
    \\ %
    \vspace{0.2cm}
    \begin{minipage}[b]{\textwidth}
        \centering
        \includegraphics[width=0.32\textwidth]{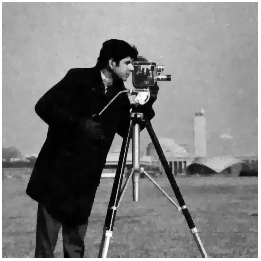}
        \includegraphics[width=0.32\textwidth]{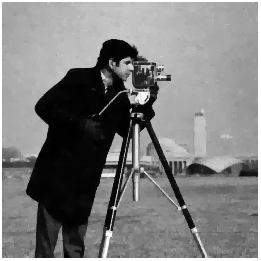}
        \includegraphics[width=0.32\textwidth]{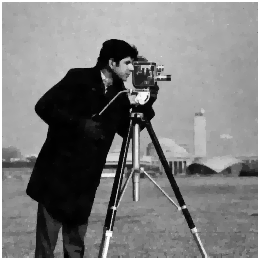}
        \\
        \parbox[t]{0.29\textwidth}{\centering (e) Halpern with\\ $\lambda_k=\frac{1}{k+1}$} \quad
        \parbox[t]{0.29\textwidth}{\centering (f) Halpern with\\ $\lambda_k=\frac{1}{\varphi_k+1}$} \quad
        \parbox[c]{0.29\textwidth}{\centering (g) Fast KM}
    \end{minipage}
\end{minipage}
\vspace{-0.7em}\caption{Comparison of the original image, noisy image, and denoised images obtained by the competing algorithms. (a) Original image of `Cameraman'; (b) Noisy image with noise level $\sigma=15$; (c) Denoised image using TKMA; (d) Denoised image using FPPA; (e) Denoised image using Halpern algorithm with $\lambda_k=\frac{1}{k+1}$; (f) Denoised image using Halpern algorithm with $\lambda_k=\frac{1}{\varphi_k+1}$; (g) Denoised image using Fast KM algorithm.}
\label{denoisedfigure}
\end{figure*}

\begin{figure}[!htbp]
\label{PSNRandOFV}
\centering
\begin{tabular}{cc}
\includegraphics[width=0.46\textwidth, height=0.23\textheight]{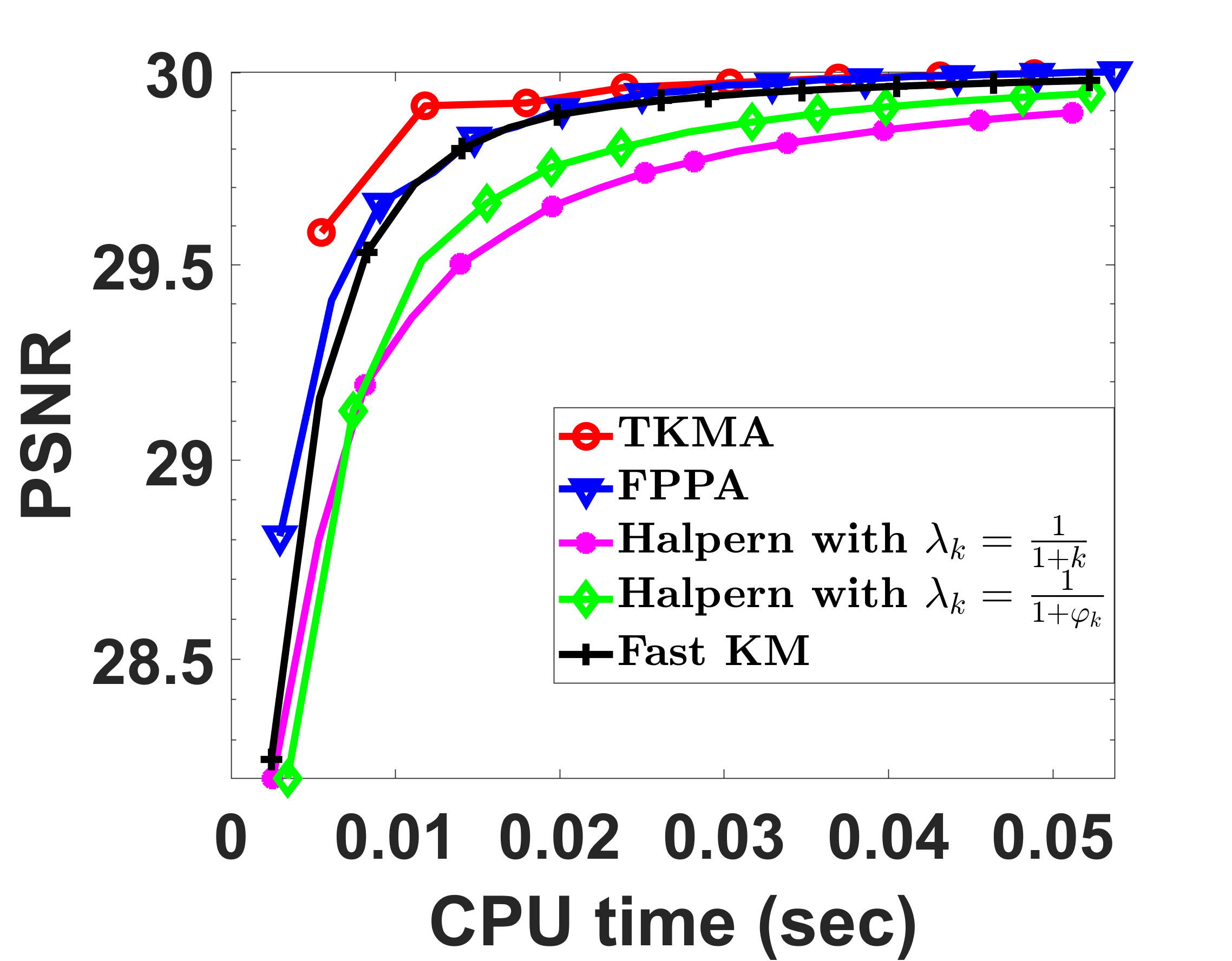}
\hspace{-1em}
&\includegraphics[width=0.47\textwidth, height=0.24\textheight]{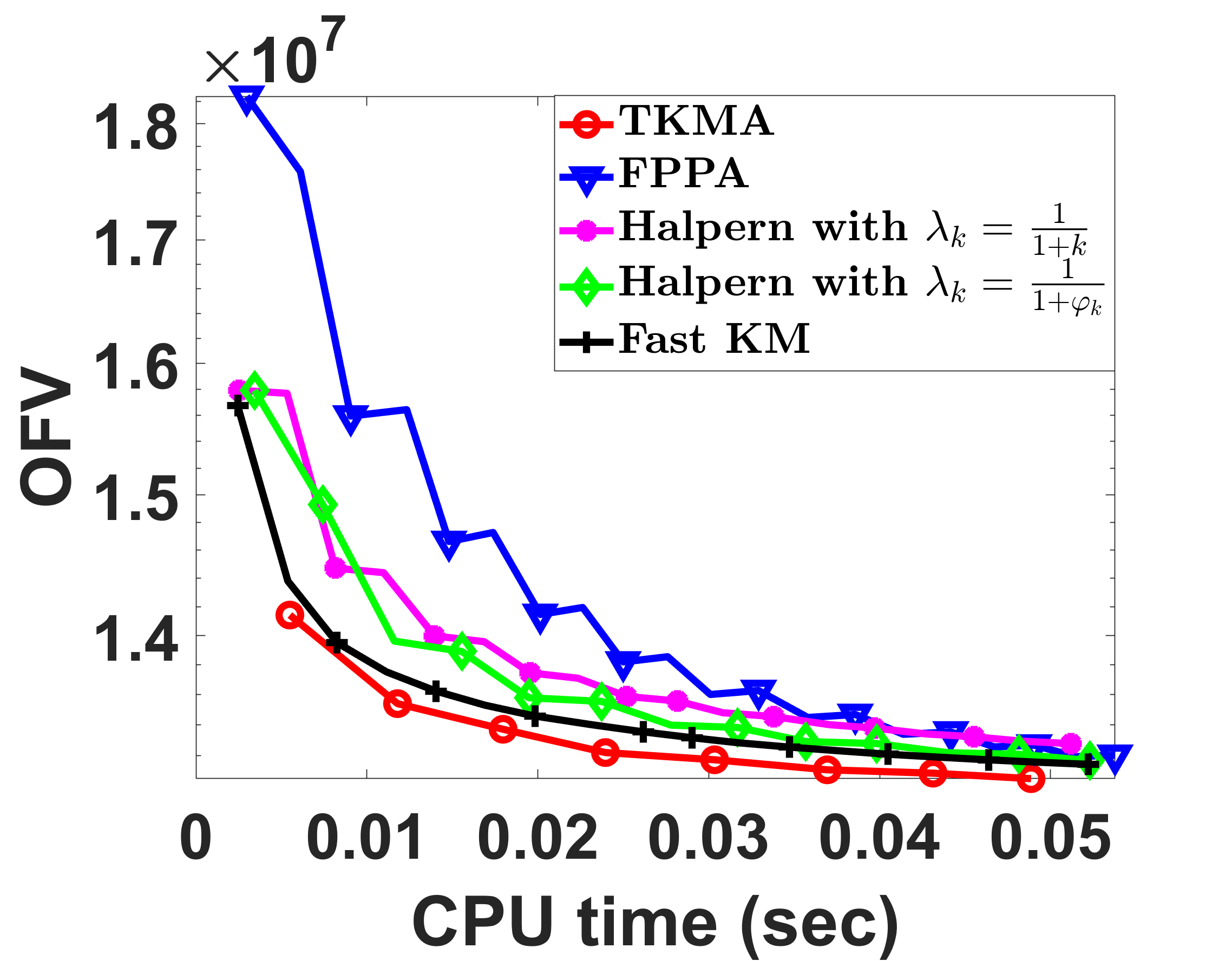}
\end{tabular}
\vspace{-1em}\caption{PSNR (left) and OFV (right) against CPU time for TKMA, FPPA, Halpern algorithm with $\lambda_k=\frac{1}{k+1}$, Halpern algorithm with $\lambda_k=\frac{1}{\varphi_k+1}$,
and Fast KM algorithm, under noise level $\sigma=15$.}
\label{sigma15PSNRandOFV}
\end{figure}

We then evaluate the performance of each algorithm for denoising the noisy `Lighthouse' image with a noise level of $\sigma=25$. Figure \ref{denoisedfigure2} shows the original image, the noisy image, and the denoised images obtained from each algorithm following 20 iterations of $T_2$. Figure \ref{sigma25PSNRandOFV} presents the PSNR and OFV against CPU time, which again confirms the superior performance of the TKMA. Note that at a CPU time of approximately 0.05 seconds in Figure \ref{sigma15PSNRandOFV}, the iteration counts for TKMA, FPPA, Halpern ($\lambda_k=\frac{1}{k+1}$), Halpern ($\lambda_k=\frac{1}{\varphi_k+1}$), and Fast KM are $9$, $22$, $20$, $14$, and $20$, respectively. Similar results are observed for the case $\sigma=25$ in Figure \ref{sigma25PSNRandOFV}. Notably, TKMA involves two operations of the operator $T$ per iteration, which incurs a higher per-iteration cost than the other methods.

\begin{figure*}[!htbp]
\centering
\begin{minipage}[c]{0.257\textwidth}
    \centering
    \includegraphics[width=\textwidth]{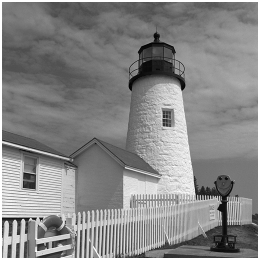}
    \\
    (a) Original image
\end{minipage}%
\hfill
\begin{minipage}[c]{0.7\textwidth}
    \centering
    \begin{minipage}[b]{\textwidth}
        \centering
        \includegraphics[width=0.32\textwidth]{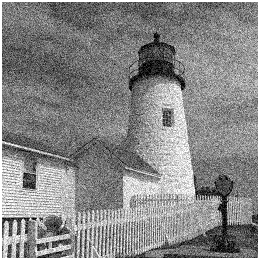}
        \includegraphics[width=0.32\textwidth]{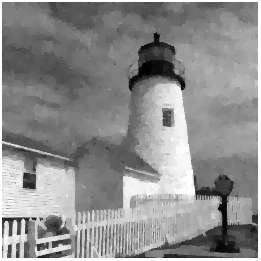}
        \includegraphics[width=0.32\textwidth]{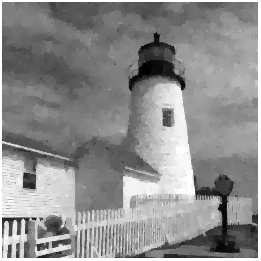}
        \\
        \hspace{-1.5em}(b) Noisy image  \ \  \quad\quad \ (c) TKMA \ \ \quad\quad\quad \  (d) FPPA \quad
    \end{minipage}
    \\
    \vspace{0.3cm}
    \begin{minipage}[b]{\textwidth}
        \centering
        \includegraphics[width=0.32\textwidth]{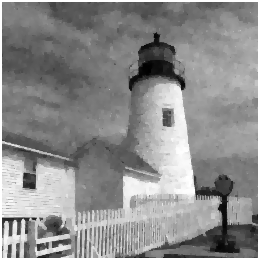}
        \includegraphics[width=0.32\textwidth]{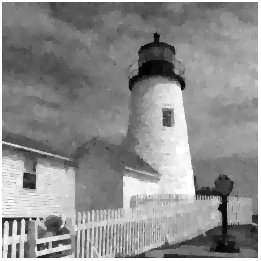}
        \includegraphics[width=0.32\textwidth]{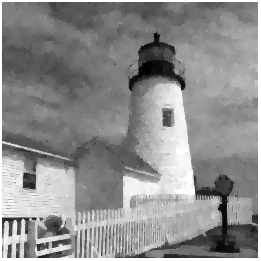}
        \\
        \parbox[t]{0.29\textwidth}{\centering (e) Halpern with\\ $\lambda_k=\frac{1}{k+1}$} \quad
        \parbox[t]{0.29\textwidth}{\centering (f) Halpern with\\ $\lambda_k=\frac{1}{\varphi_k+1}$} \quad
        \parbox[c]{0.29\textwidth}{\centering (g) Fast KM}
    \end{minipage}
\end{minipage}
\vspace{-0.5em}\caption{Comparison of the original image, noisy image, and denoised images obtained by the competing algorithms. (a) Original image of `Lighthouse'; (b) Noisy image with noise level $\sigma=25$; (c) Denoised image using TKMA; (d) Denoised image using FPPA; (e) Denoised image using Halpern algorithm with $\lambda_k=\frac{1}{k+1}$; (f) Denoised image using Halpern algorithm with $\lambda_k=\frac{1}{\varphi_k+1}$; (g) Denoised image using Fast KM algorithm.}
\label{denoisedfigure2}
\end{figure*}

\begin{figure}[!htbp]
\label{PSNRandOFV4}
\hspace{-1em}
\centering
\hspace{-1em}
\begin{tabular}{cc}
\includegraphics[width=0.46\textwidth, height=0.235\textheight]{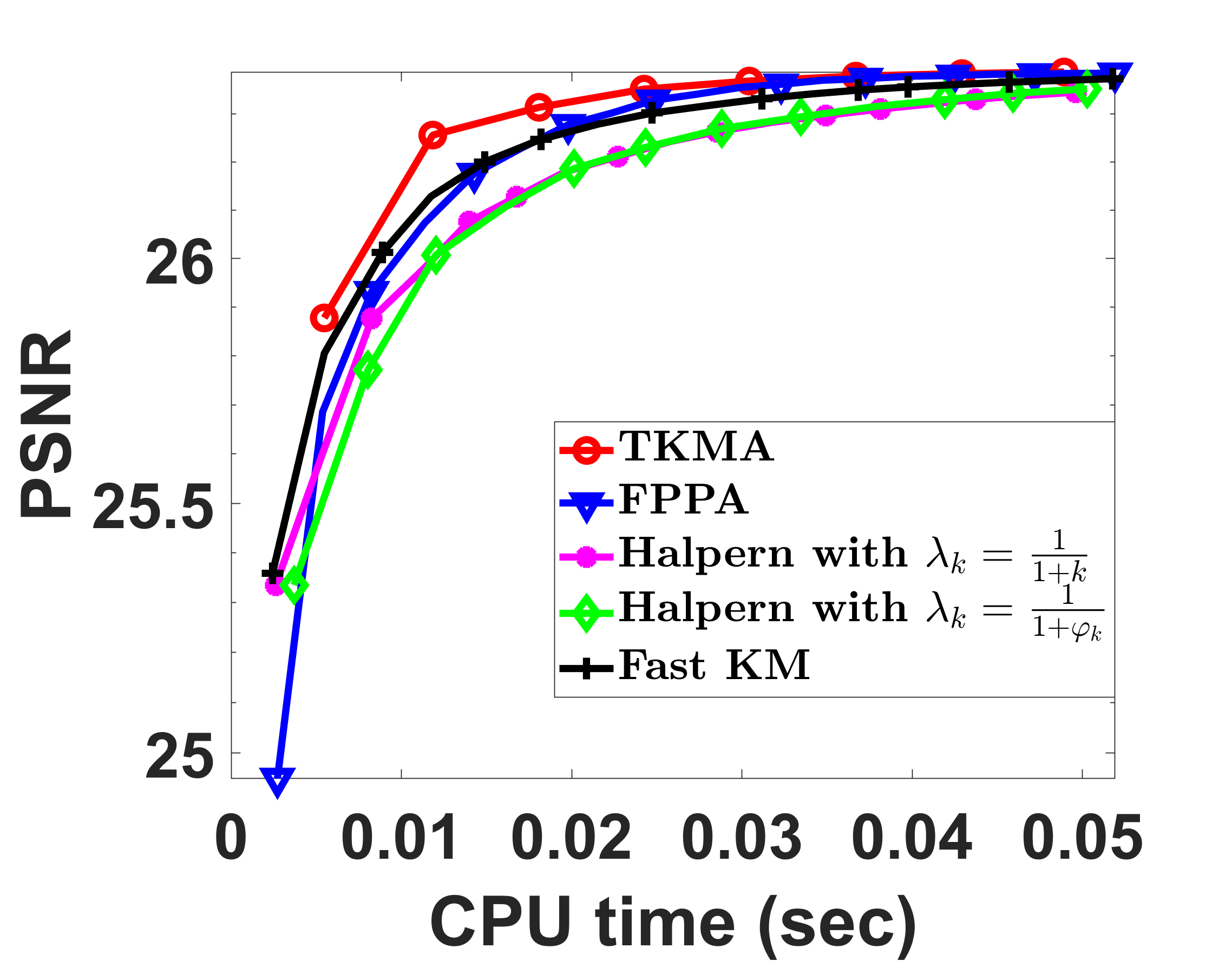}
\hspace{-1em}&
\includegraphics[width=0.46\textwidth, height=0.24\textheight]{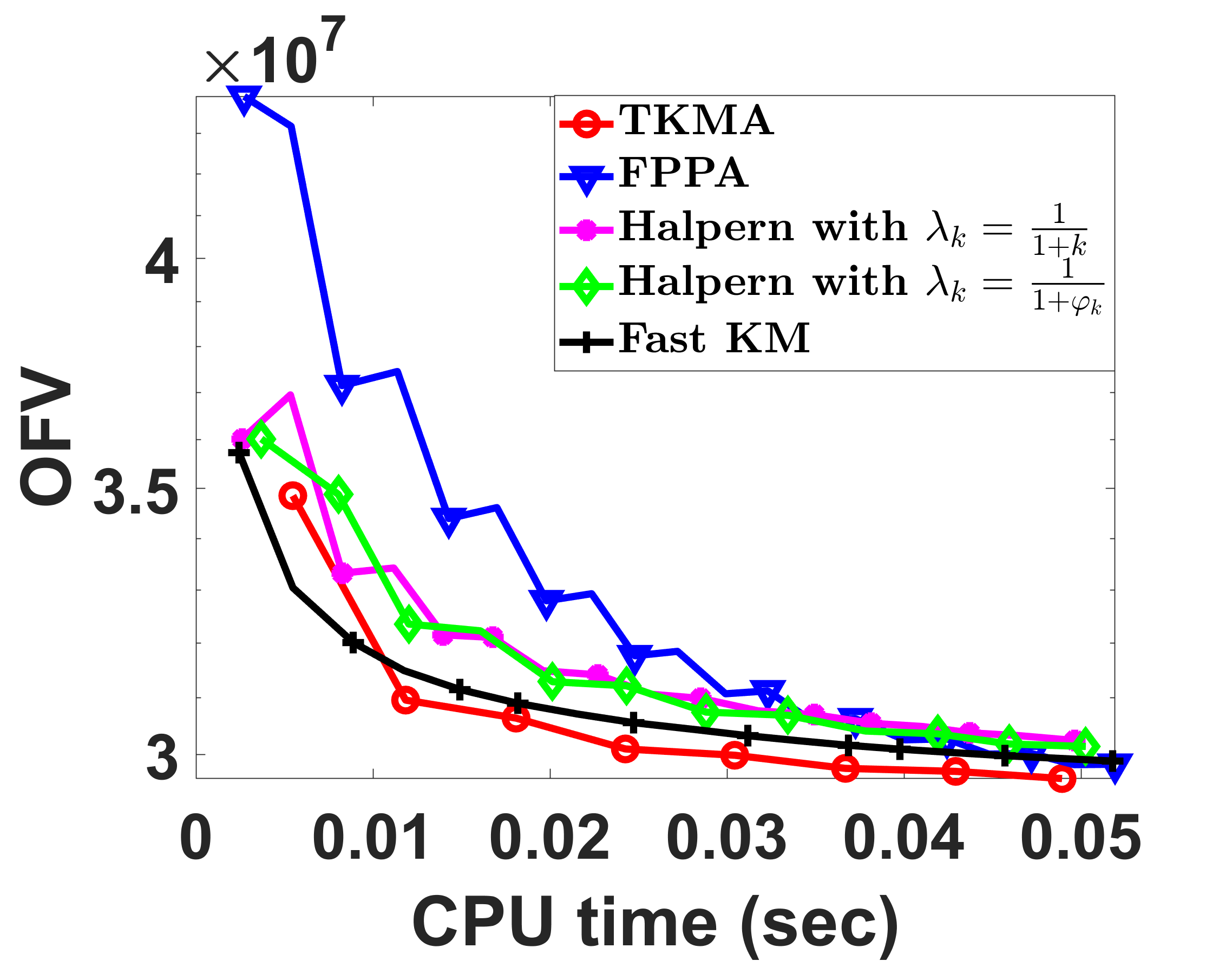}
\end{tabular}
\vspace{-0.5em}\caption{PSNR (left) and OFV (right) against CPU time for TKMA, FPPA, Halpern algorithm with $\lambda_k=\frac{1}{k+1}$, Halpern algorithm with $\lambda_k=\frac{1}{\varphi_k+1}$,
and Fast KM algorithm, under noise level $\sigma=25$.}
\label{sigma25PSNRandOFV}
\end{figure}

To further evaluate the convergence performance of the TKMA, we plot the FP residual against the iteration count in Figure \ref{Residualdenosing} for noise levels $\sigma = 15$ and $\sigma = 25$. The stopping criterion is $\|T\bmx^k-\bmx^k\|<10^{-10}$, with a maximum of $100,000$ iterations. Additionally, a reference line indicating the $1/k$ decay rate is superimposed for comparison. These results demonstrate that the residual of TKMA decays faster than the other compared methods and the $1/k$ benchmark. Specifically, under both noise levels, it requires fewer than one quarter of the iterations needed by the Fast KM algorithm to reach an accuracy of $10^{-10}$, and it also requires substantially fewer iterations than the Halpern algorithm with $\lambda_k=\frac{1}{k+1}$ and the $1/k$ reference line. Although TKMA requires twice the time per iteration compared to Fast KM, its performance on the FP residual still far surpasses that of Fast KM and the Halpern algorithm with $\lambda_k=\frac{1}{k+1}$.

\begin{figure}[!htbp]
\centering
\begin{tabular}{cc}
\hspace{-1em}
\includegraphics[width=0.47\textwidth]{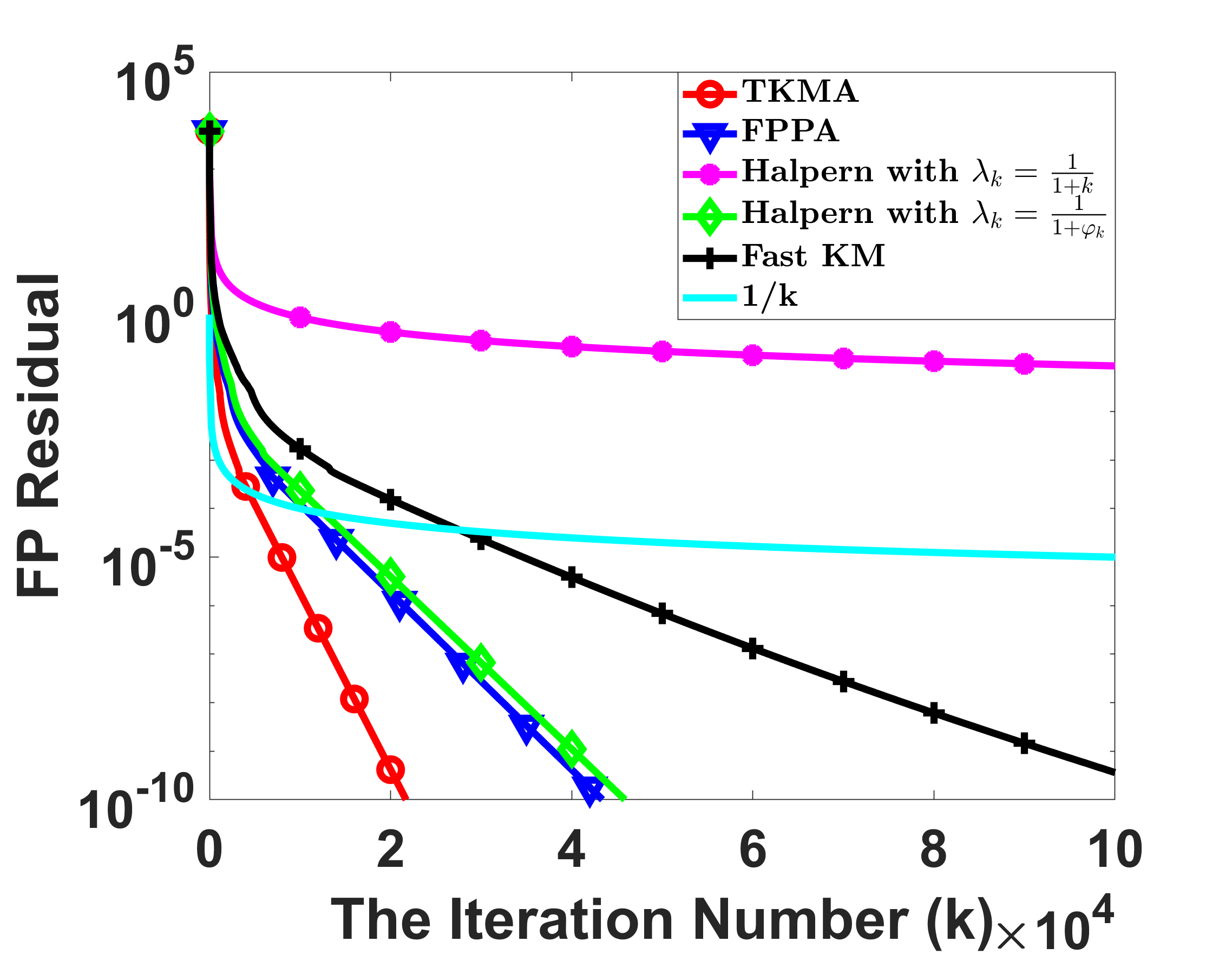}&
\includegraphics[width=0.47\textwidth]{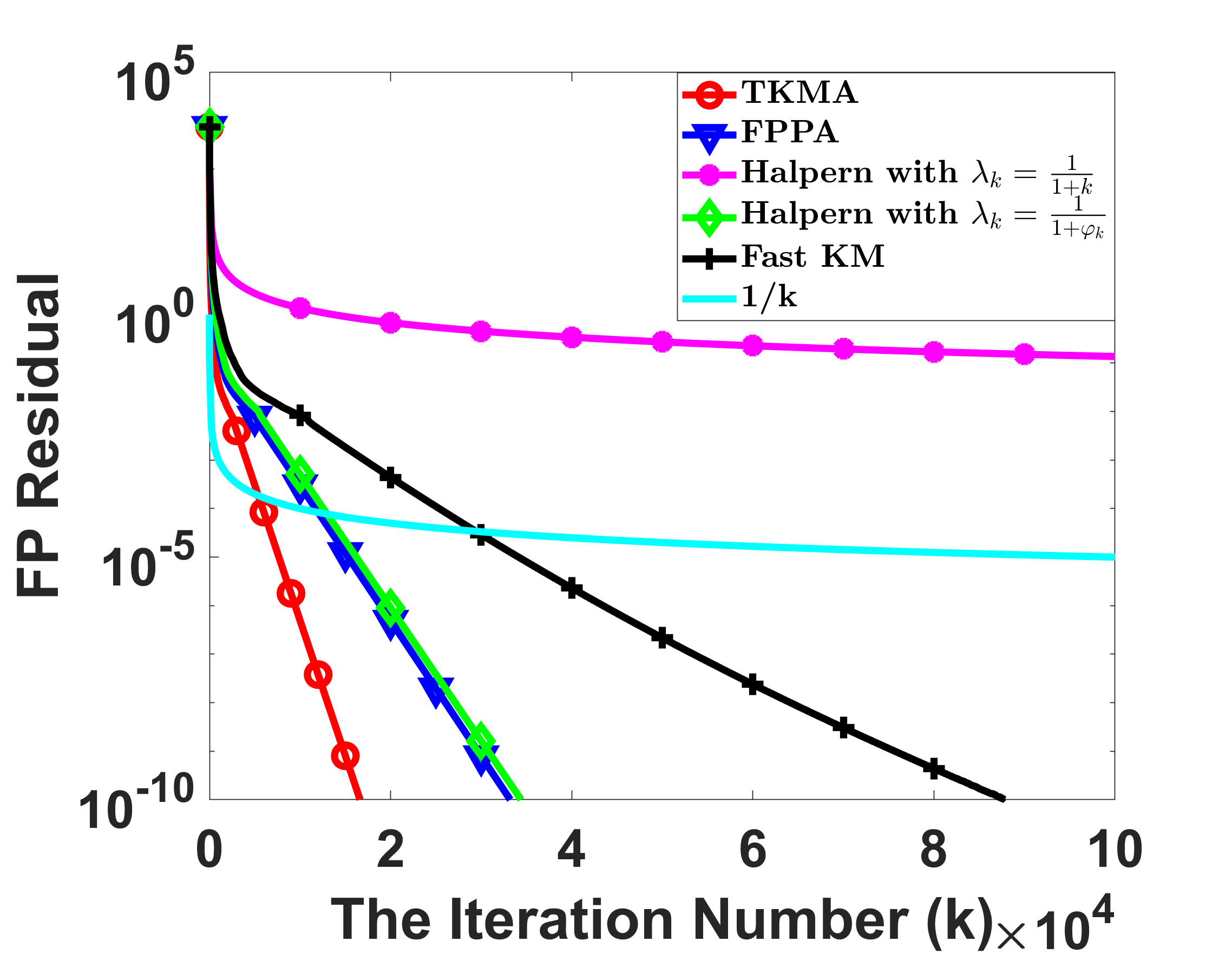}
\end{tabular}
\caption{FP residual versus the iteration number for TKMA, FPPA, Halpern algorithm with $\lambda_k=\frac{1}{k+1}$, Halpern algorithm with $\lambda_k=\frac{1}{\varphi_k+1}$,
and Fast KM algorithm, under noise levels $\sigma=15$ (left) and $\sigma=25$ (right).}
\label{Residualdenosing}
\end{figure}

Finally, we plot in Figure \ref{sumthetak} the partial sums $\sum_{k=0}^{n} |\theta_{k+1}-\theta_k|$ against the iteration count $n$. The observed convergence trend of the partial sums provides empirical evidence that the infinite series $\sum_{k=0}^{\infty} |\theta_{k+1}-\theta_k|$ may be bounded in the denoising scenario. This result aligns with the theoretical condition in Theorem \ref{convergerate} and offers experimental support for the convergence rate of TKMA established therein.

\begin{figure}[!htbp]
\centering
\begin{tabular}{cc}
\includegraphics[width=0.5\textwidth]{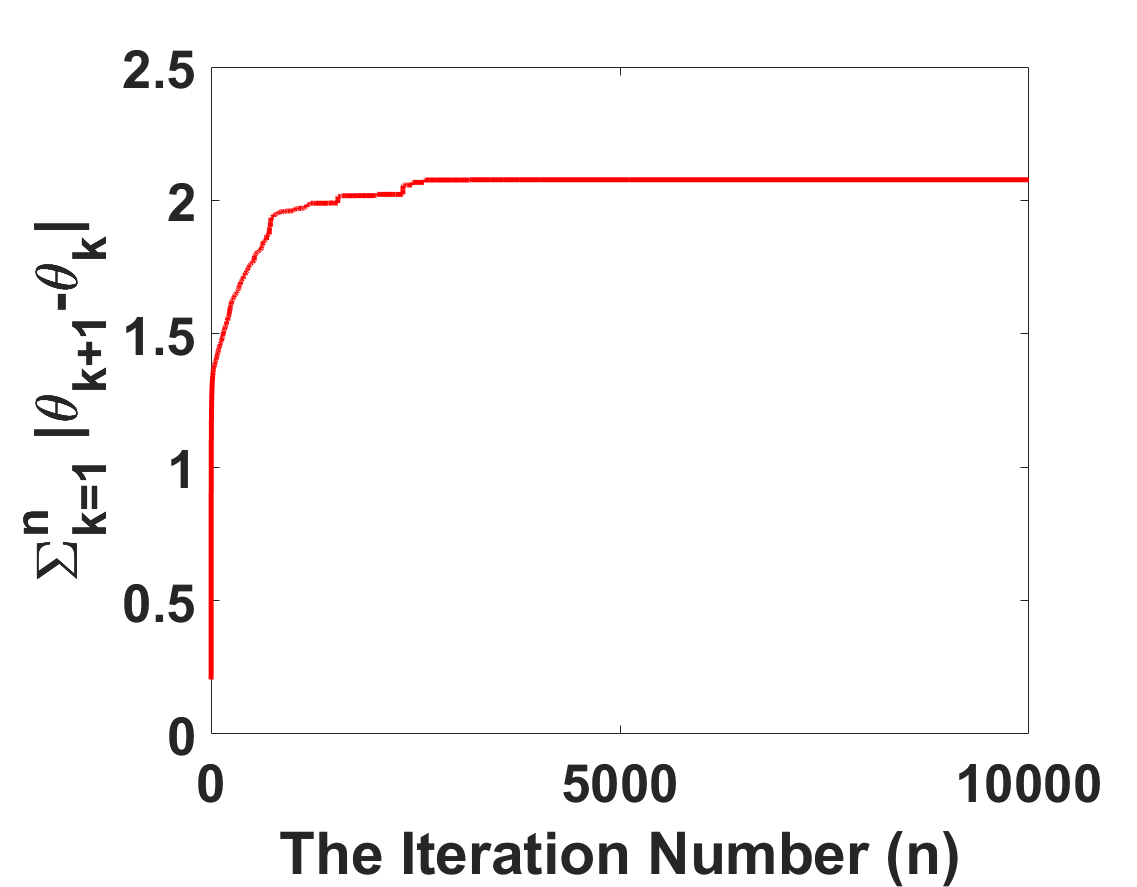}
\end{tabular}
\caption{$\sum_{k=0}^{n} |\theta_{k+1}-\theta_k|$ for TKMA against the iteration count $n$.}
\label{sumthetak}
\end{figure}

\subsection{Low-rank Matrix Completion Problem}\label{lowrank}
In this subsection, we consider a well-known low-rank matrix completion problem, which can be formulated as\vspace{-0.4em}
\begin{equation}\label{lowmatrix}\vspace{-0.4em}
\argmin_{\bmX\in\bbRnn} \frac{1}{2}\|P_{\Omega}(\bmX-\bmA)\|^2_F \ \ \text{subject to} \ \ \|\bmX\|_{\ast}\leq r,
\end{equation}
where $P_{\Omega}$ denotes the projection onto a randomly sampled index set $\Omega$, $r$ is the supposed maximum rank, and the nuclear norm constraint $\|\bmX\|_{\ast}\leq r$ is applied to enforce a low-rank solution. The most expensive computational step involved in solving this problem is the projection onto the nuclear norm ball $\mathcal{C}=\{\bmX:\|\bmX\|_{\ast}\leq r\}$, which requires a singular value decomposition (SVD). In our experiments, the ground-truth matrix $\bmA$ was generated as a low-rank product $\bmU\bmV^{\top}$, where $\bmU$ and $\bmV$ are n-by-r matrices with entries sampled from a normal distribution. The observation set $\Omega$ contained $20\%$ of all entries (i.e., $\frac{1}{5}n^2$), selected uniformly at random.

We define $f(\bmX):=\frac{1}{2}\|P_{\Omega}(\bmX-\bmA)\|^2_F$, and denote by $P_{\mathcal{C}}$ the projection operator onto the set $\mathcal{C}$. The optimization problem \eqref{lowmatrix} can be addressed using a PGA as follows.
\vspace{0.5em}

\hspace{-1.8em}\rule[0em]{13cm}{0.1em}\\\vspace{-0.5em}
\text{{\bf PGA for Low-rank Matrix Completion}}\\
\hspace{-1.5em}\rule[0em]{13cm}{0.1em}\\
{\bf Input:} Given the matrix $\bmA\in\mathbb{R}^{n \times n}$; the observed entries index set $\Omega$; the maximum rank $r > 0$; the step size $\alpha > 0$; the maximum number of iterations $K$.

\hspace{-1.85em}{\bf Initialization:} Initialize the matrix $\bmX^0$ as a zero matrix, then $\bmX^0_{ij} = \bmA_{ij}$ for $(i,j) \in \Omega$. Set $k=1$.

\hspace{-1.85em}{\bf repeat}

\vspace{0.15em}

$\bmX^{k} = P_{\mathcal{C}}\left(\bmX^{k-1} - \alpha\nabla f(\bmX^{k-1})\right)$

$k=k+1$

\vspace{0.15em}

\hspace{-1.85em}{\bf until} $k>K$

\vspace{0.15em}

\hspace{-1.85em}{\bf Output}: Completed matrix $\bmX^K$.
\vspace{-0.4em}

\hspace{-1.8em}\rule[0em]{13cm}{0.1em}
\vspace{0em}

Let $T_3:=P_{\mathcal{C}}\circ\left(I - \alpha\nabla f\right)$. It is easy to see that this operator is averaged nonexpansive when $\alpha\in(0,2/L)$, where $L$ is the Lipschitz constant of $\nabla f$. Based on \eqref{eq:IKMAiter}, the TKMA for solving model \eqref{lowmatrix} can be given as follows.

\hspace{-1.8em}\rule[0em]{13cm}{0.1em}\\\vspace{-0.5em}
\text{{\bf TKMA for Low-rank Matrix Completion}}\\
\hspace{-1.5em}\rule[0em]{13cm}{0.1em}\\
{\bf Input:} Given the matrix $\bmA\in\mathbb{R}^{n \times n}$; the observed entries index set $\Omega$; the maximum rank $r > 0$; the step size $\alpha > 0$; the
combination coefficient $t$; the maximum number of iterations $K$.\vspace{0.25em}

\hspace{-1.85em}{\bf Initialization:} Initialize the matrix $\bmX^0$ as a zero matrix, and set $\bmX^0_{ij} = \bmA_{ij}$ for $(i,j) \in \Omega$. Set $k=1$.

\hspace{-1.85em}{\bf repeat}

$\tilde{\bmX}^{k} = P_{\mathcal{C}}\left(\bmX^{k-1} - \alpha\nabla f(\bmX^{k-1})\right)$

$\bmY^{k} = P_{\mathcal{C}}\left(\tilde{\bmX}^{k} - \alpha\nabla f(\tilde{\bmX}^{k})\right)$

$\theta_k = \frac{-\langle \tilde{\bmX}^k-\bmX^{k-1},\tilde{\bmX}^k-\bmY^k\rangle_F}{\|\tilde{\bmX}^k-\bmX^{k-1}\|_F^2}$

$\bmZ^k=(1+\theta_k)\tilde{\bmX}^k-\theta_k \bmX^{k-1}$

$\bmX^k = (1-t)\bmY^k+t \bmZ^k$

$k=k+1$

\hspace{-1.85em}{\bf until} $k>K$

\hspace{-1.85em}{\bf Output}: Completed matrix $\bmX^K$.

\vspace{-0.4em}
\hspace{-1.8em}\rule[0em]{13cm}{0.1em}
\vspace{0em}

In this experiment, we consider a $100\times 100$ matrix and explore both the scenarios of rank $r=30$ and $r=50$. The step size parameter $\alpha$ is set to $1.99$. The algorithmic parameter for TKMA is consistent with that used in the denoising experiment in section \ref{ImageDenoising}. Both the PGA and the Halpern algorithm (with $\lambda_k=\frac{1}{k+1}$ or $\lambda_k=\frac{1}{\varphi_k+1}$) require no parameter tuning. For the Fast KM algorithm, the parameters $s$ and $\alpha$ are set to $1$ and $1000$, respectively, based on our fine-tuning. The results are shown in Figures \ref{OFVandREforlowmatrixr30}-\ref{residualr3050}, where we compare the OFV, RE and FP residual against CPU time. From the experimental results, it is clear that TKMA outperforms all other methods in this scenario. In addition, we plot the FP residual against the iteration count in Figure \ref{Residualmatrix} for ranks $r = 30$ and $r = 50$, with a tolerance of $10^{-4}$ and a maximum of 1,000 iterations. Compared to the $1/k$ reference line, TKMA exhibits a much more rapid decrease in residual and reaches a substantially lower level.

\vspace{1em}
\begin{figure}[htbp]
\label{OFVforlowmatrixr30}
\centering
\begin{tabular}{cc}
\hspace{-1em}
\includegraphics[width=0.475\textwidth, height=0.23\textheight]{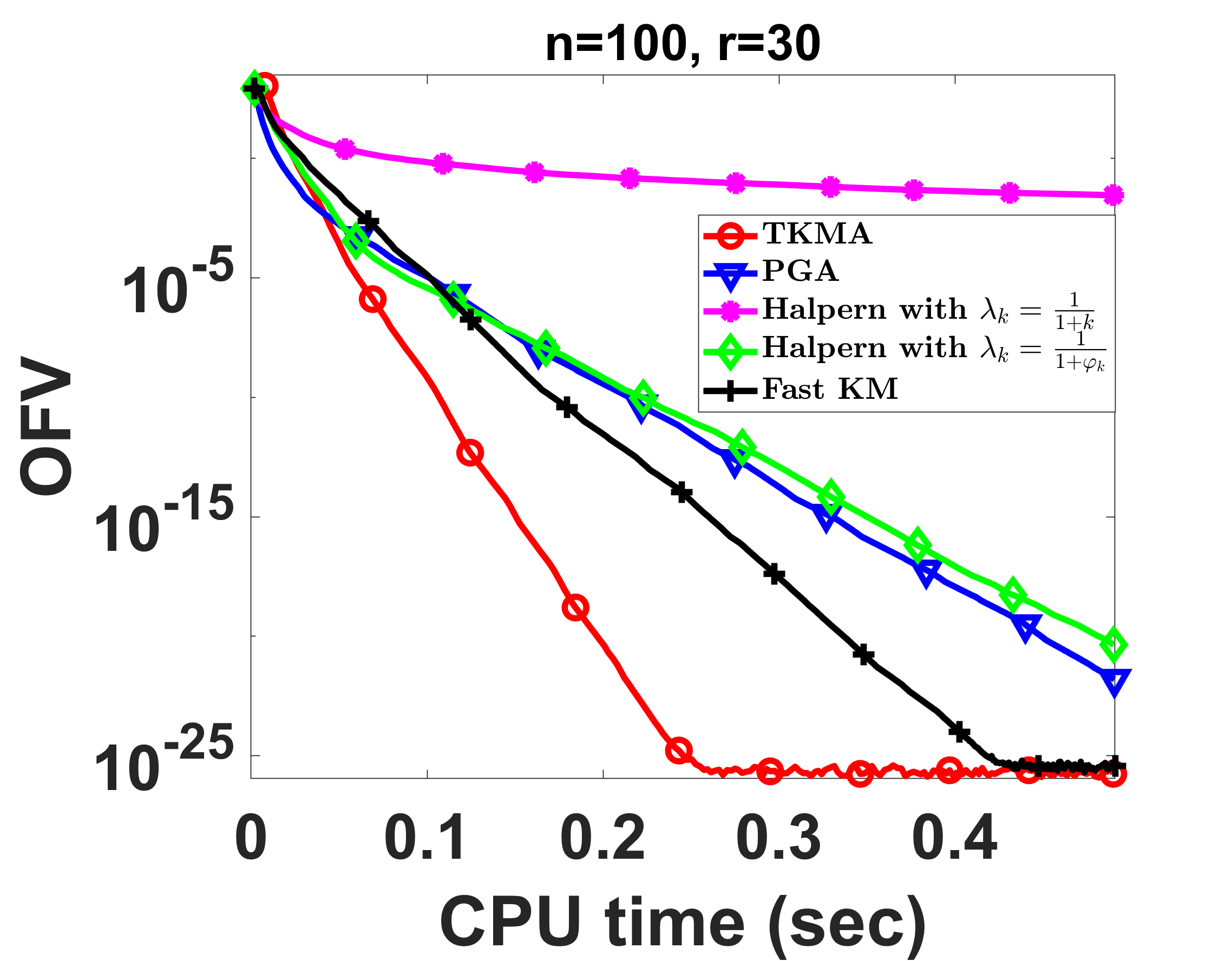}&
\includegraphics[width=0.475\textwidth, height=0.23\textheight]{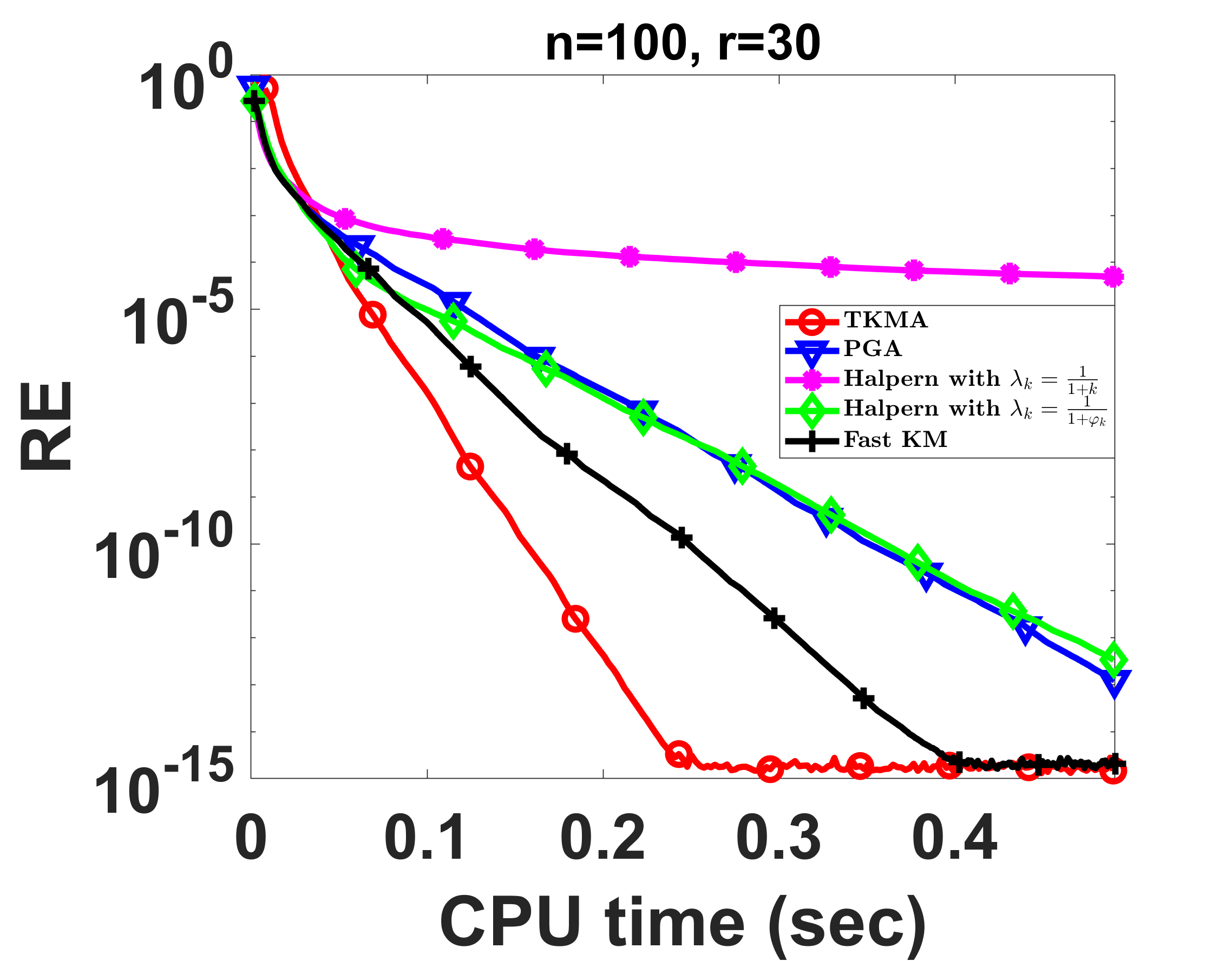}
\end{tabular}
\vspace{-0.7em}\caption{OFV (left) and RE (right) against CPU time for TKMA, PGA, Halpern algorithm with $\lambda_k=\frac{1}{k+1}$, Halpern algorithm with $\lambda_k=\frac{1}{\varphi_k+1}$,
and Fast KM algorithm with rank $r=30$.}
\label{OFVandREforlowmatrixr30}
\end{figure}

\begin{figure}[htbp]
\centering
\begin{tabular}{cc}
\hspace{-1em}
\includegraphics[width=0.475\textwidth, height=0.23\textheight]{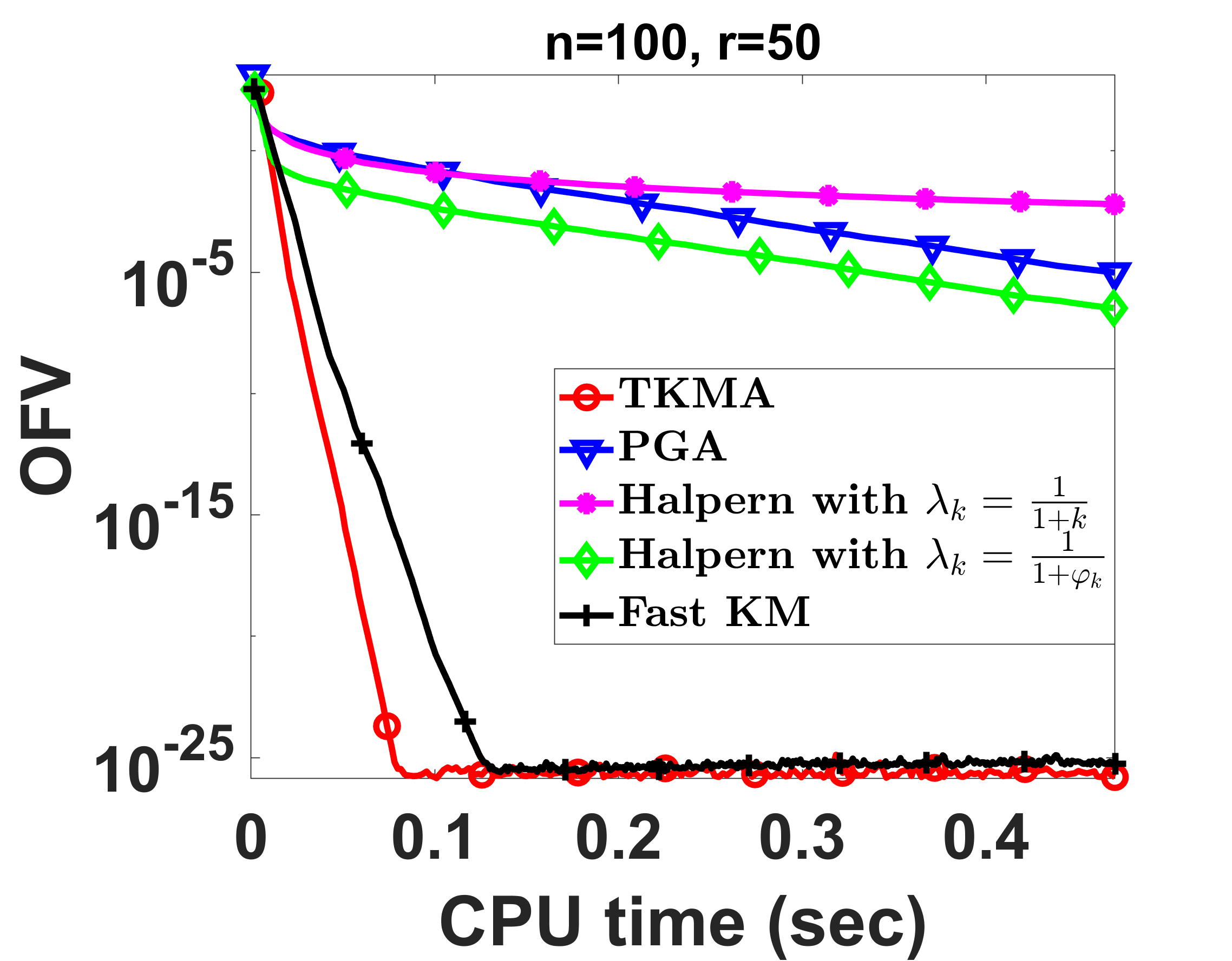}&
\includegraphics[width=0.475\textwidth, height=0.23\textheight]{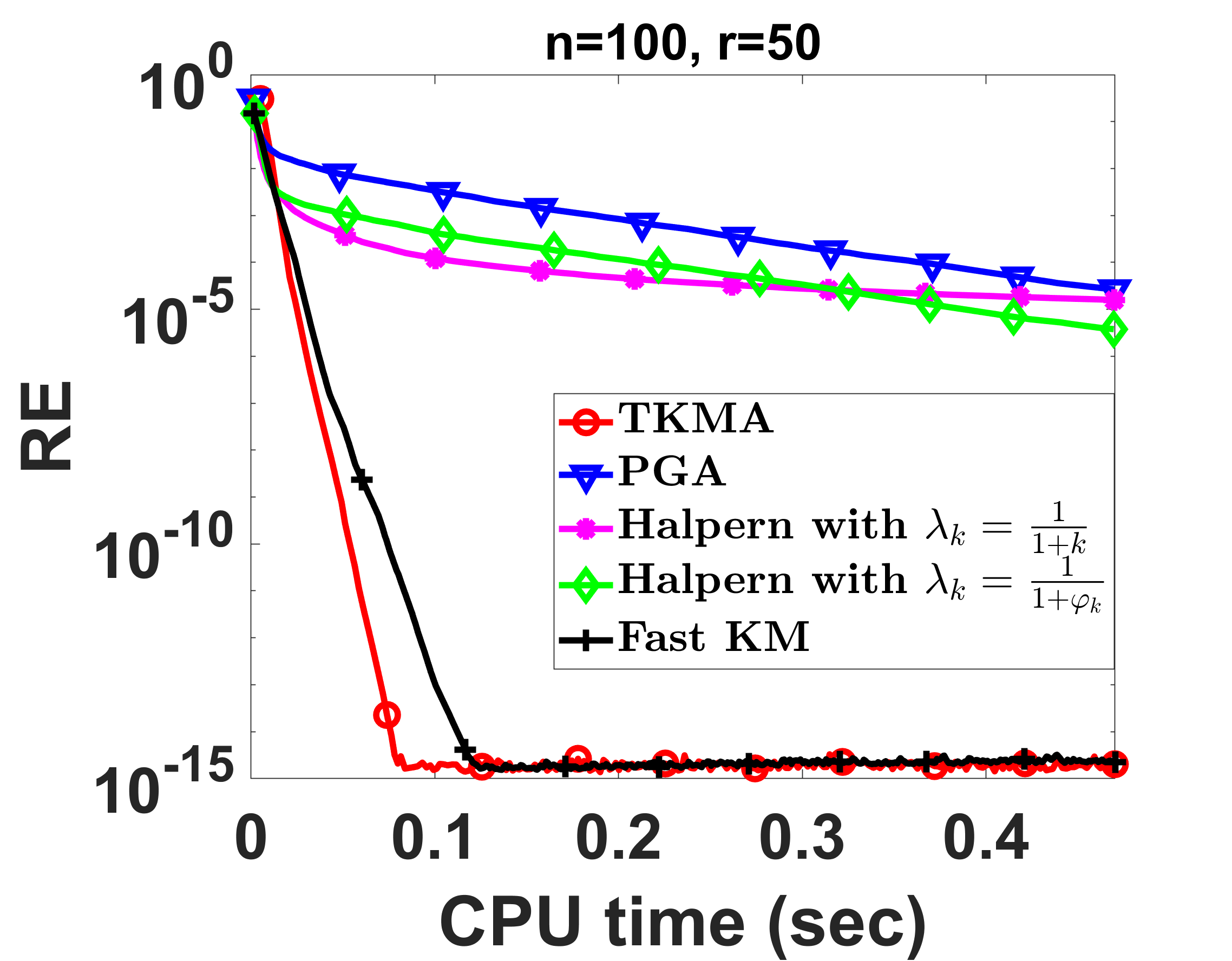}
\end{tabular}
\vspace{-0.7em}\caption{OFV (left) and RE (right) against CPU time for TKMA, PGA, Halpern algorithm with $\lambda_k=\frac{1}{k+1}$, Halpern algorithm with $\lambda_k=\frac{1}{\varphi_k+1}$,
and Fast KM algorithm with rank $r=50$.}
\label{OFVandREforlowmatrixr50}
\end{figure}

\begin{figure}[htbp]
\centering
\begin{tabular}{cc}
\hspace{-1em}
\includegraphics[width=0.475\textwidth, height=0.23\textheight]{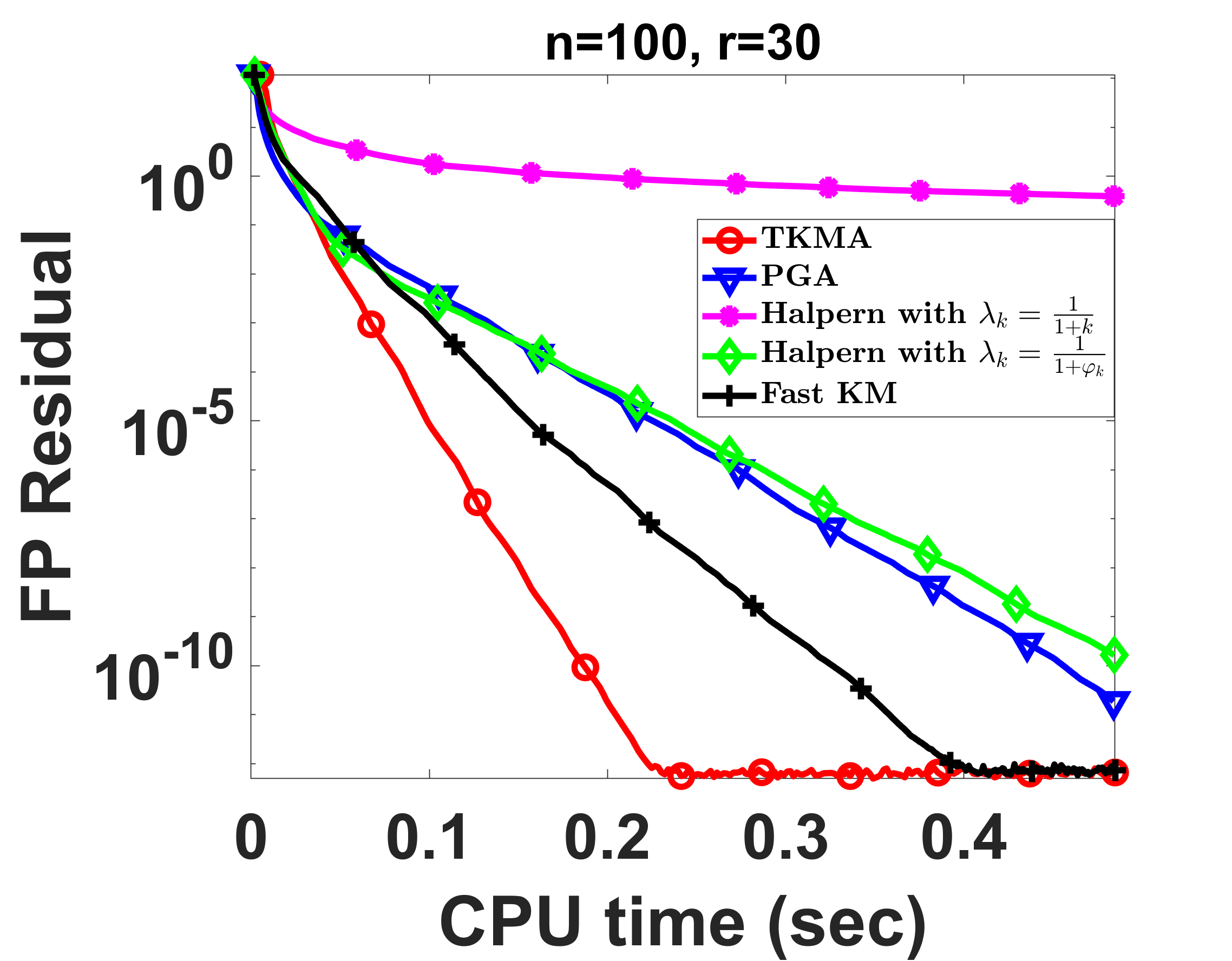}&
\includegraphics[width=0.475\textwidth, height=0.23\textheight]{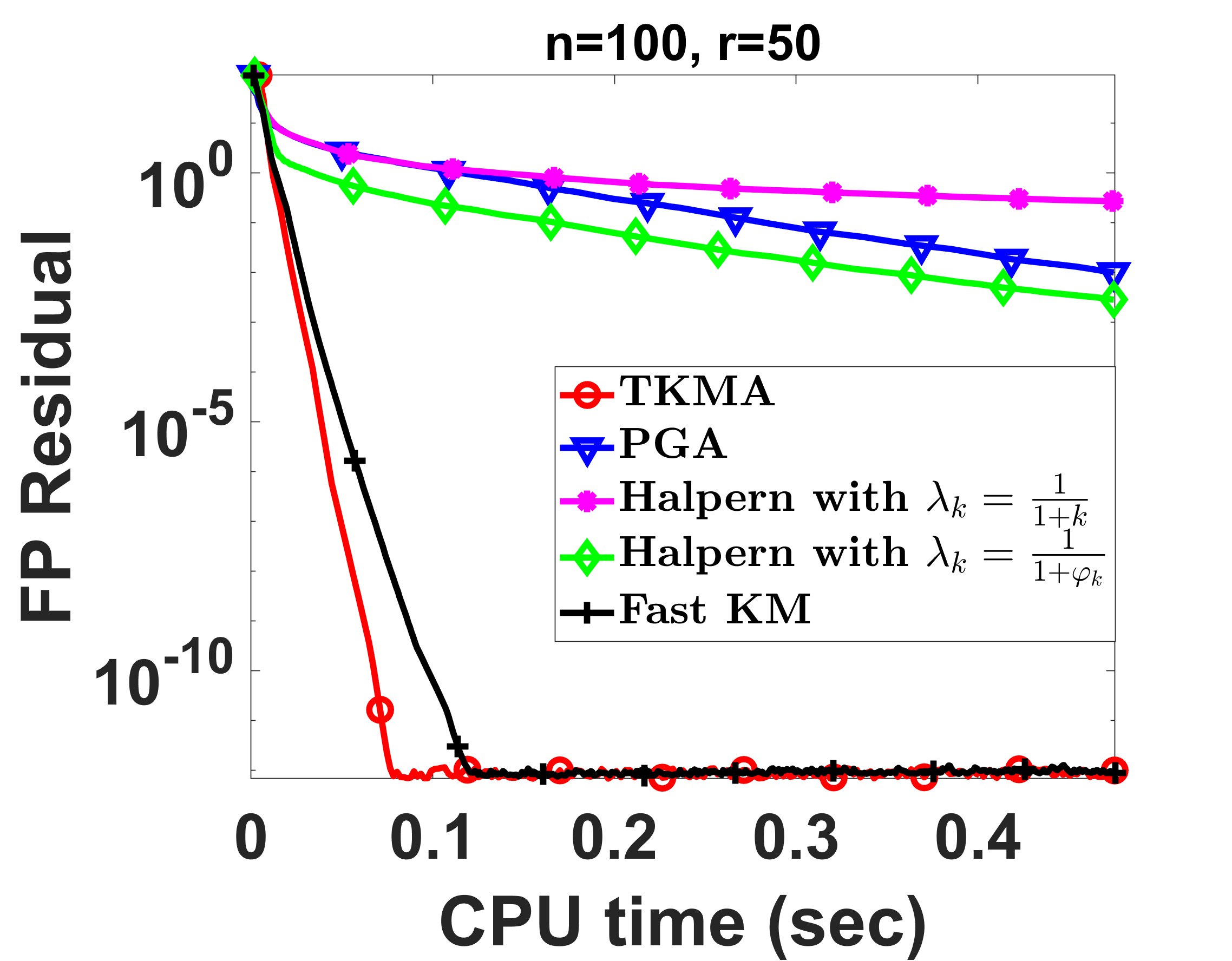}
\end{tabular}
\vspace{-0.7em}\caption{FP residual against CPU time for TKMA, PGA, Halpern algorithm with $\lambda_k=\frac{1}{k+1}$, Halpern algorithm with $\lambda_k=\frac{1}{\varphi_k+1}$,
and Fast KM algorithm with ranks $r=30$ (left) and $r=50$ (right).}
\label{residualr3050}
\end{figure}

\begin{figure}[htbp]
\centering
\begin{tabular}{cc}
\hspace{-1em}
\includegraphics[width=0.47\textwidth]{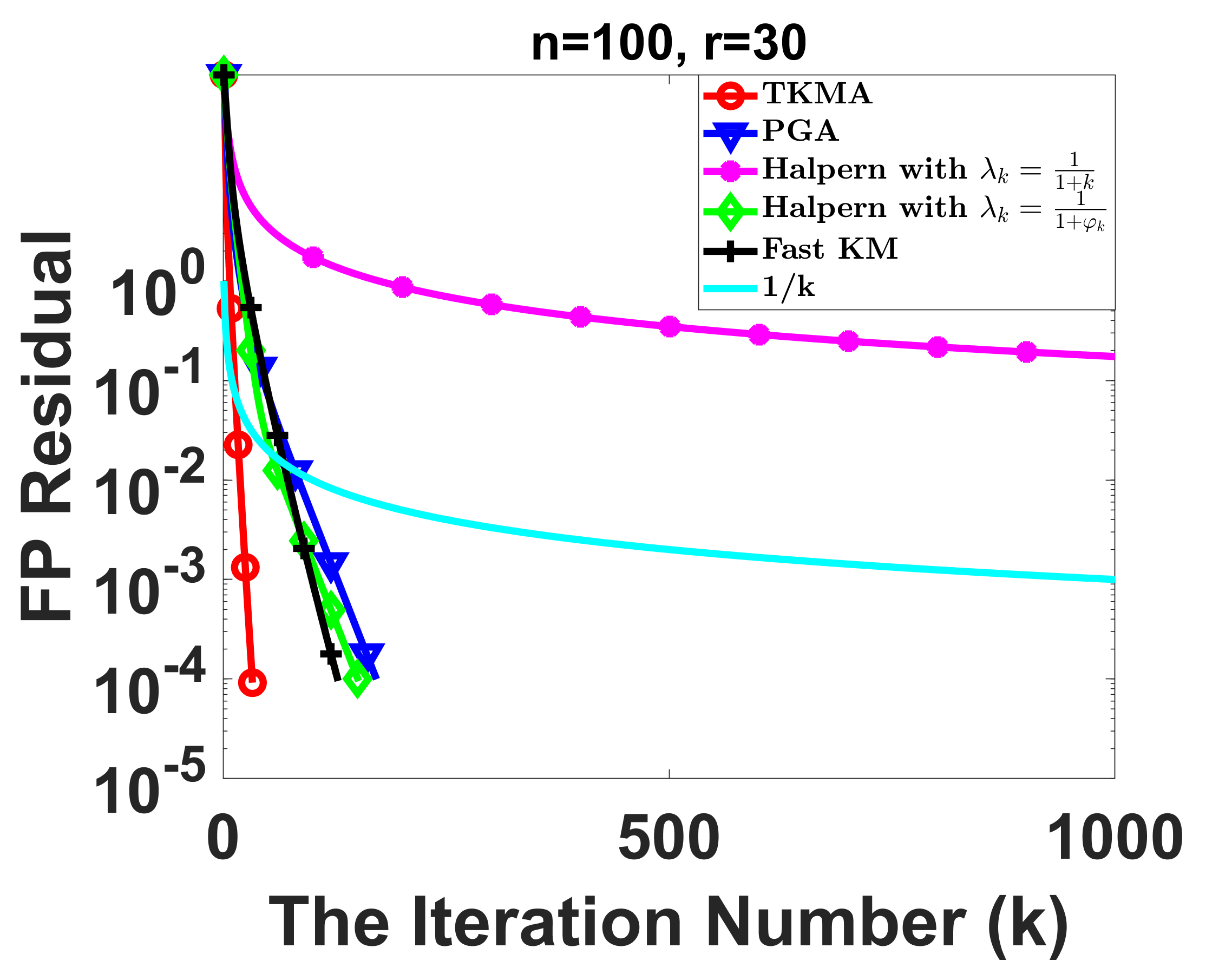}&
\includegraphics[width=0.47\textwidth]{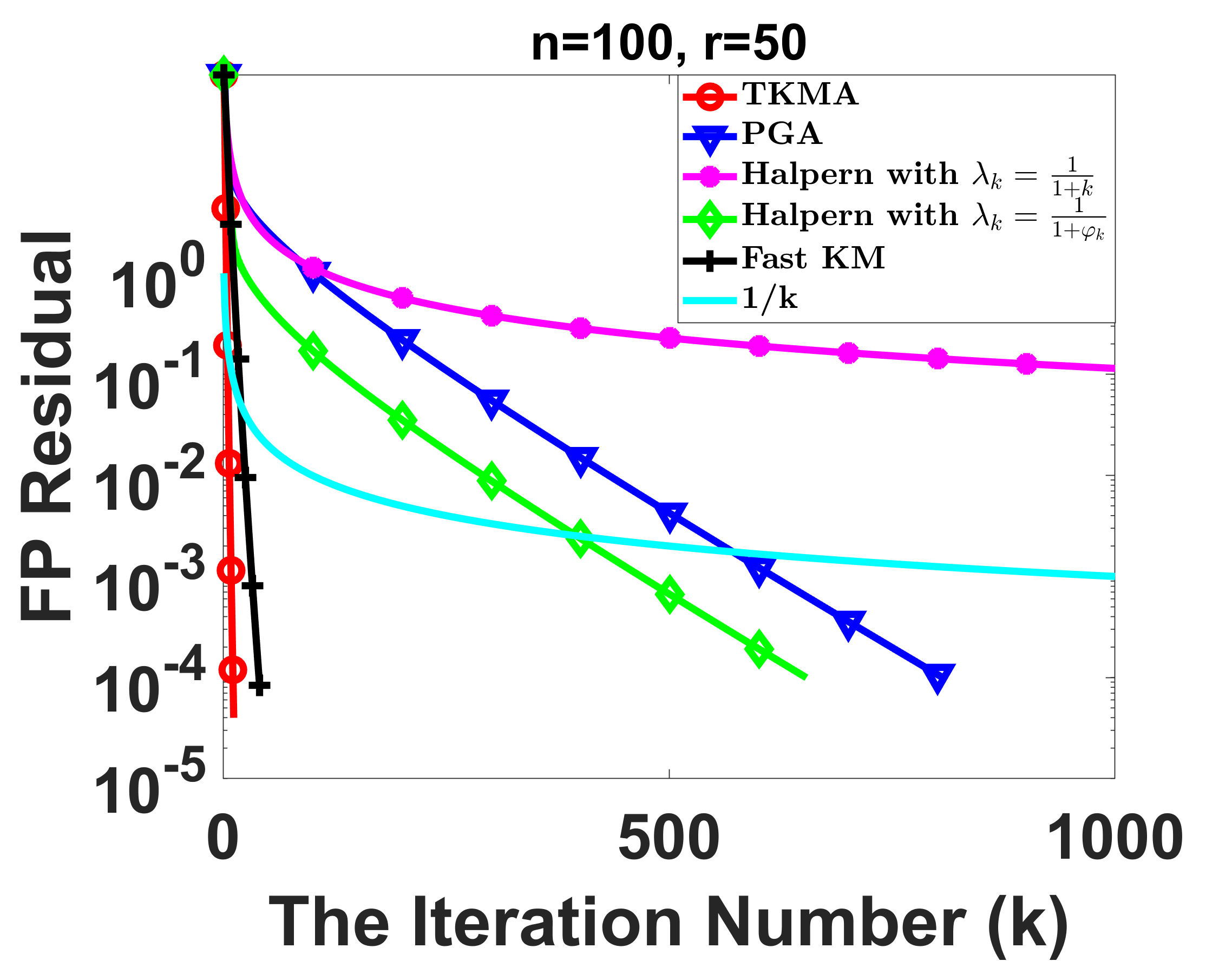}
\end{tabular}
\vspace{-0.5em}\caption{FP residual versus the iteration number for TKMA, PGA, Halpern algorithm with $\lambda_k=\frac{1}{k+1}$, Halpern algorithm with $\lambda_k=\frac{1}{\varphi_k+1}$,
and Fast KM algorithm with ranks $r=30$ (left) and $r=50$ (right).}
\label{Residualmatrix}
\end{figure}

\section{Conclusion}
In this work, we introduce a Two-step Krasnosel'ski\u{\i}-Mann Algorithm with adaptive momentum (TKMA), offering a new perspective for enhancing fixed-point methods in convex optimization. This algorithm extends the classical KM framework by incorporating novel adaptive momentum  and a two-step iterative structure. Theoretical analysis establishes both the weak convergence of the TKMA and an $o(1/\sqrt{k})$ convergence rate in terms of the distance between consecutive iterates under certain assumptions in a real Hilbert space. Numerical results on image denoising and low-rank matrix completion validate the practical advantage of the TKMA over several state-of-the-art methods, demonstrating its potential as an effective tool for large-scale imaging and data optimization problems.

\newpage
\bibliographystyle{siamplain}
\bibliography{bibfile}



\end{sloppypar}
\end{document}